\documentclass[12pt]{article}
\usepackage{amsmath}
\usepackage{amsthm}
\usepackage{amsfonts}
\usepackage{eucal}
\theoremstyle{plain}
\newtheorem{lem}{Lemma}
\newtheorem{thm}{Theorem}
\newtheorem{coroll}{Corollary}

\theoremstyle{definition}
\newtheorem{remark}{Remark}
\newtheorem*{proof1}{Proof of Theorem \ref{poly}}


\def\be{\beta}

\def\zt{\zeta}
\def\si{\sigma}
\def\zts{\zeta^{\star}}
\def\R{\mathbf R}

\newcommand{\N}{\ensuremath{\mathbb{N}}}
\newcommand{\E}{\ensuremath{\mathbb{E}}}
\newcommand{\fallfak}[2]{\ensuremath{#1^{\underline{#2}}}}
\newcommand{\stirone}[2]{\genfrac{ [ }{ ] }{0pt}{}{#1}{#2}}
\newcommand{\stirtwo}[2]{\genfrac{ \{ }{ \} }{0pt}{}{#1}{#2}}

\DeclareMathOperator{\Unif}{\text{Uniform}}
\newcommand{\bo}[1]{\ensuremath{\mathbf{#1}}}

\begin{document}
\title{An Asymptotic Series for an Integral}
\author{Michael E. Hoffman, Markus Kuba, Moti Levy, and Guy Louchard}
\date{March 10, 2018}
\maketitle
\begin{abstract}
We obtain an asymptotic series $\sum_{j=0}^\infty\frac{I_j}{n^j}$ for the 
integral $\int_0^1[x^n+(1-x)^n]^{\frac1{n}}dx$ as $n\to\infty$, and compute 
$I_j$ in terms of alternating (or ``colored'') multiple zeta value.
We also show that $I_j$ is a rational polynomial the ordinary zeta values,
and give explicit formulas for $j\le 12$.
As a byproduct, we obtain precise results about the convergence of norms of random variables and their moments. 
We study $\Vert(U,1-U)\Vert_n$ as $n$ tends to infinity and we also discuss $\Vert(U_1,U_2,\dots,U_r)\Vert_n$ for standard uniformly distributed random variables.
\end{abstract}
\section{Introduction}
\par\noindent
Let
\begin{equation}
\label{idef}
I(n)=\int_0^1[x^n+(1-x)^n]^{\frac1{n}}dx .
\end{equation}
We shall obtain an asymptotic series
\[
I(n)=I_0+\frac{I_1}{n}+\frac{I_2}{n^2}+\frac{I_3}{n^3}+\cdots
\]
This integral has been discussed in~\cite{L} (together with a different problem proposed by M.D. Ward). Therein, it as treated by a different approach using Euler sums and polylogarithms, leading to the first few terms $I_0$ up $I_7$ in terms of multiple zeta values.

\smallskip

Here, we give a complete expansion of $I(n)$. 
The coefficients $I_k$ can be written in terms of alternating or 
``colored'' multiple zeta values. 
The multiple zeta values are defined by
\[
\zt(i_1,\dots,i_k)=\sum_{n_1>\cdots>n_k\ge 1}\frac1{n_1^{i_1}\cdots n_k^{i_k}}
\]
for positive integers $i_1,\dots,i_k$ with $i_1>1$.
This notation can be extended to alternating or ``colored'' multiple
zeta values by putting a bar over those exponents with an associated
sign in the numerator, as in
\[
\zt(\bar 3,\bar1,1)=\sum_{n_1>n_2>n_3\ge 1}\frac{(-1)^{n_1+n_2}}{n_1^3n_2n_3} .
\]
Note that $\zt(a_1,a_2,\dots,a_k)$ converges unless $a_1$ is an 
unbarred 1.  We have $\zt(\bar1)=-\log 2$ and
\[
\zt(\bar n)=(2^{1-n}-1)\zt(n)
\]
for $n\ge 2$.  Alternating multiple zeta values have been extensively
studied, and some identities for them are established in \cite{BBB}.
Our formula for $I_k$, $k\ge 2$, can be stated as
\begin{equation}
\label{altf}
I_k=\frac{(-1)^k}2\sum_{j=2}^kE_{2\lfloor\frac{j-1}2\rfloor+1}(0)
\zt(\bar j,\underbrace{1,\dots,1}_{k-j}) ,
\end{equation}
where $E_n$ is the $n$th Euler polynomial.
But in fact the right-hand side of Eq. (\ref{altf}) can always be
rewritten as a rational polynomial in the ordinary zeta values $\zt(i)$, 
$i\ge 2$.  This follows from an identity
of K\"olbig \cite{K} that relates alternating multiple zeta values 
$\zt(\bar n,1,\dots,1)$ and multiple zeta values $\zt(n,1,\dots,1)$.
\smallskip 

After our main result, we interpret the integral $I(n)$ as the expected value of a certain random variable $Z_n$, defined in terms of the $n$th norm of the random vector $(U,1-U)$. Here, $U$ denotes a standard uniformly distributed random variable.
We complement our analysis of $I(n)=\E(Z_n)$ by studying the positive real moments $\E(Z_n^s)$ in terms of (alternating) multiple zeta values, as $n$ tends to infinity. Moreover, we also discuss as a counterpart the $n$th norm of the random vector $(U_1,U_2,\dots,U_r)$ for $r\ge 2$ and derive its moments in terms of multiple zeta values and related sums. 

\section{Main result: a complete expansion of I(n)}
Because of the symmetry around $x=\frac12$ in (\ref{idef}), one can
write
\[
I(n)=2\int_0^{\frac12}[x^n+(1-x)^n]^{\frac1n}dx=2\int_0^{\frac12}(1-x)
\left[1+\left(\frac{x}{1-x}\right)^n\right]^{\frac1{n}}dx .
\]
Now let $u=\frac{x}{1-x}$, or $x=\frac{u}{1+u}$.  Then
$dx=\frac{du}{(1+u)^2}$, and we have
\[
I(n)=2\int_0^1\left(1-\frac{u}{1+u}\right)(1+u^n)^{\frac1{n}}\frac{du}{(1+u)^2}=
2\int_0^1(1+u^n)^{\frac1{n}}\frac{du}{(1+u)^3} .
\]
Writing $(1+u^n)^{\frac1{n}}$ as $\exp\left(\frac1{n}\log(1+u^n)\right)$
and expanding the exponential in series, we have
\[
I(n)=2\int_0^1\left(1+\sum_{k=1}^\infty\frac1{k!}\left(\frac1{n}\log(1+u^n)\right)^k
\right)\frac{du}{(1+u)^3} .
\]
Now we can write (see \cite[p. 351]{GKP})
\begin{equation}
\label{logp}
\left(\log(1+x)\right)^k=k!\sum_{m=1}^\infty \frac{x^m}{m!}s(m,k),
\end{equation}
where the $s(m,k)$ are (signed) Stirling numbers of the first kind.
Hence
\begin{multline*}
I(n)=2\int_0^1\frac{du}{(1+u)^3}+2\sum_{k=1}^\infty\int_0^1k!\sum_{m=1}^\infty
\frac{u^{mn}s(m,k)}{m!n^kk!}\frac{du}{(1+u)^3}\\
=\frac34+2\sum_{k=1}^\infty\frac1{n^k}
\sum_{m=1}^\infty\frac{s(m,k)}{m!}\int_0^1\frac{u^{mn}}{(1+u)^3}du .
\end{multline*}
If we let $\zt_r(i_1,\dots,i_k)$ denote the truncated multiple zeta value
\[
\zt_r(i_1,\dots,i_k)=\sum_{r\ge n_1>n_2>\dots>n_k\ge 1}
\frac1{n_1^{i_1}n_2^{i_2}\cdots n_k^{i_k}} ,
\]
then we have the following relation, which is well-known although
perhaps not in this notation (cf. \cite{A}).
\begin{lem}
\label{stirmzv}
For positive integers $m\ge k$,
\[
s(m,k)=(-1)^{m-k}(m-1)!\zt_{m-1}(\{1\}_{k-1}),
\]
where $\{1\}_m$ means 1 repeated $m$ times.
\end{lem}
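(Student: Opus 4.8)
The plan is to view both sides of the asserted identity as coefficients of one and the same polynomial of degree $m-1$, namely the shifted factorial $\prod_{j=1}^{m-1}(x+j)$, and then read off the equality by comparing coefficients. First I would replace the signed Stirling numbers by the unsigned ones via $s(m,k)=(-1)^{m-k}\stirone{m}{k}$, so that the target becomes the sign-free statement
\[
\stirone{m}{k}=(m-1)!\,\zt_{m-1}(\{1\}_{k-1}).
\]
The factor $(-1)^{m-k}$ then plays no role in the argument and is reinstated only at the very end.

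For the left-hand side I would invoke the standard generating function of the unsigned Stirling numbers of the first kind, $\prod_{j=0}^{m-1}(x+j)=\sum_{k\ge 0}\stirone{m}{k}x^{k}$. Discarding the $j=0$ factor $x$ shows that $\stirone{m}{k}$ is the coefficient of $x^{\,k-1}$ in $\prod_{j=1}^{m-1}(x+j)$. For the right-hand side I would factor the same polynomial as $(m-1)!\prod_{j=1}^{m-1}\bigl(1+\tfrac{x}{j}\bigr)$ and expand the product; the coefficient of $x^{i}$ in $\prod_{j=1}^{m-1}\bigl(1+\tfrac{x}{j}\bigr)$ is the elementary symmetric function of degree $i$ in $1,\tfrac12,\dots,\tfrac1{m-1}$, which is exactly
\[
\sum_{m-1\ge n_1>\cdots>n_i\ge 1}\frac{1}{n_1\cdots n_i}=\zt_{m-1}(\{1\}_i),
\]
with the empty product ($i=0$) equal to $1$. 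Matching the coefficient of $x^{\,k-1}$ on the two sides (so $i=k-1$) yields the sign-free identity, and multiplying through by $(-1)^{m-k}$ gives the lemma.

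I do not expect a genuine obstacle, since every step is an identity between polynomials; the only care needed is bookkeeping — aligning the index shift between $x^{k-1}$ and $x^{i}$, confirming that the truncation level is $m-1$ (not $m$) and that $\zt_{m-1}(\{1\}_{k-1})$ carries $k-1$ summation indices, and checking the boundary case $i=0$. Should a self-contained inductive proof be preferred, I would instead verify that both sides obey the Stirling recurrence $s(m+1,k)=s(m,k-1)-m\,s(m,k)$ with base values $s(m,m)=1$ and $s(m,1)=(-1)^{m-1}(m-1)!$; on the zeta side the recurrence follows from splitting $\zt_{m}(\{1\}_{k-1})$ according to whether the largest summation index equals $m$, which gives $\zt_m(\{1\}_{k-1})=\zt_{m-1}(\{1\}_{k-1})+\tfrac1m\zt_{m-1}(\{1\}_{k-2})$.
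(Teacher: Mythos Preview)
Your proposal is correct and is essentially the paper's own proof: both identify $\stirone{m}{k}/(m-1)!$ with the elementary symmetric function $e_{k-1}(1,\tfrac12,\dots,\tfrac1{m-1})$, which is manifestly $\zt_{m-1}(\{1\}_{k-1})$. The only cosmetic difference is that you expand the rising factorial $\prod_{j=1}^{m-1}(x+j)$ directly, whereas the paper starts from the falling factorial, obtains $e_{m-k}(1,\dots,m-1)$, and then divides by $(m-1)!$ to reach the same elementary symmetric function in the reciprocals.
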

\begin{proof}
From the relation
\[
x(x-1)\cdots (x-n+1)=\sum_{n=0}^n s(n,k)x^k
\]
it follows that $s(n,k)=(-1)^{n-k}e_{n-k}(1,2,\dots,n-1)$, were $e_j$
is the $j$th elementary symmetric function. Divide by $(n-1)!$ to get
\[
\frac{s(n,k)}{(n-1)!}=(-1)^{n-k}\frac{e_{n-k}(1,2,\dots,n-1)}{(n-1)!}
=(-1)^{n-k}e_{k-1}\left(1,\frac12,\dots,\frac1{n-1}\right),
\]
and the conclusion follows since evidently $\zt_{n-1}(\{1\}_{n-k})=
e_{k-1}\left(1,\frac12,\dots,\frac1{n-1}\right)$.
\end{proof}
Thus
\begin{equation}
\label{Ieq}
I(n)=\frac34+2\sum_{k=1}^\infty \frac1{n^k}\sum_{m=1}^\infty(-1)^{m-k}
\frac{\zt_{m-1}(\{1\}_{k-1})}{m}\int_0^1\frac{u^{mn}}{(1+u)^3}du .
\end{equation}
\par
If we write
\[
\int_0^1 \frac{u^r}{(1+u)^3}du=\sum_{j=1}^\infty \frac{\be_{j-1}}{r^j} ,
\]
then the $\be_j$ can be computed explicitly as follows.
\begin{lem}
\[
\be_j=\frac{(-1)^j}4(E_{j+1}(-1)+E_{j+2}(-1)),
\]
where the $E_j$ are Euler polynomials.  
\end{lem}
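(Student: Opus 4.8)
The plan is to recognize the coefficients $\be_j$ as normalized Taylor coefficients of an explicit function at the origin, and then to match that function's exponential generating function against a combination of Euler-polynomial generating functions. First I would substitute $u=e^{-s}$, which converts the integral into a Laplace transform:
\[
\int_0^1\frac{u^r}{(1+u)^3}\,du=\int_0^\infty e^{-rs}\,g(s)\,ds,\qquad g(s)=\frac{e^{-s}}{(1+e^{-s})^3}=\frac{e^{2s}}{(e^s+1)^3}.
\]
Since $g$ is analytic at $s=0$ and decays at $+\infty$, Watson's lemma gives $\int_0^\infty e^{-rs}g(s)\,ds\sim\sum_{k\ge0}g^{(k)}(0)\,r^{-(k+1)}$. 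Comparing this with the defining expansion $\sum_{j\ge1}\be_{j-1}r^{-j}$ forces $\be_k=g^{(k)}(0)$; equivalently, $g$ is the exponential generating function of the sequence $(\be_k)$, that is $\sum_{k\ge0}\be_k\frac{s^k}{k!}=g(s)$.

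It then remains to verify that the proposed right-hand side has the same exponential generating function. I would start from the Euler-polynomial generating function evaluated at $x=-1$, namely $\phi(t):=\frac{2e^{-t}}{e^t+1}=\sum_{n\ge0}E_n(-1)\frac{t^n}{n!}$. Differentiation shifts the index, so $\phi'(t)=\sum_{j\ge0}E_{j+1}(-1)\frac{t^j}{j!}$ and $\phi''(t)=\sum_{j\ge0}E_{j+2}(-1)\frac{t^j}{j!}$, and a short computation collapses the sum to $\phi'(t)+\phi''(t)=\frac{4e^t}{(e^t+1)^3}$. The factor $(-1)^j$ in the claimed formula corresponds exactly to the substitution $t\mapsto-s$, so the exponential generating function of $\frac{(-1)^j}4\bigl(E_{j+1}(-1)+E_{j+2}(-1)\bigr)$ is $\frac14\bigl(\phi'(-s)+\phi''(-s)\bigr)=\frac{e^{-s}}{(1+e^{-s})^3}=g(s)$. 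Matching coefficients of $s^k/k!$ then yields the lemma (as a sanity check, the constant term gives $\be_0=g(0)=\tfrac18$).

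The genuinely delicate points are twofold. First, the rigorous application of Watson's lemma here — really just confirming that $g$ is smooth on $[0,\infty)$ and integrable, so that the asymptotic series is legitimate and the termwise reading off of $\be_k=g^{(k)}(0)$ is valid. Second, and more error-prone, is the bookkeeping of the index shift in $\phi',\phi''$ together with the sign substitution $t\mapsto-s$; the subsequent algebraic simplification of $\phi'+\phi''$ down to $4e^t/(e^t+1)^3$ is routine, but it is the one place where a computational slip could creep in and so deserves to be carried out carefully.
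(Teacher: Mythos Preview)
Your proposal is correct and follows essentially the same approach as the paper: both make the substitution $u=e^{-t}$, expand $g(t)=e^{-t}/(1+e^{-t})^3$ via the Euler-polynomial generating function at $x=-1$, and integrate term by term (what you call Watson's lemma is exactly the paper's use of $\int_0^\infty \tfrac{t^n}{n!}e^{-rt}\,dt=r^{-n-1}$). The only cosmetic difference is that the paper writes $g(t)=\tfrac14\bigl(\psi''(t)-\psi'(t)\bigr)$ with $\psi(t)=\mathcal{E}(-t,-1)=\phi(-t)$, which is precisely your identity $\phi'(-s)+\phi''(-s)=4g(s)$ after the sign flip.
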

\begin{proof}
Making the change of variable $u=e^{-t}$, we have
\[
\int_0^1\frac{u^r}{(1+u)^3}du=\int_0^\infty \frac{e^{-t}}{(1+e^{-t})^3}e^{-rt}dt .
\]
By direct computation
\[
\frac{e^{-t}}{(1+e^{-t})^3}=\frac14\left[\frac{d^2}{dt^2}\left(\frac{2e^t}
{1+e^{-t}}\right)-\frac{d}{dt}\left(\frac{2e^t}{1+e^{-t}}\right)\right].
\]
The generating function of the Euler polynomials is defined by 
\begin{equation}
\label{EulerGF}
\mathcal{E}(t,x)=\frac{2e^{tx}}{1+e^t}=\sum_{j\ge 0}E_j(x)\frac{t^j}{j!}.
\end{equation}
Differentiating $\mathcal{E}(-t,-1)$ gives
\[
\frac{d}{dt}\left(\frac{2e^t}{1+e^{-t}}\right)=
-\sum_{n=0}^\infty (-1)^n E_{n+1}(-1)\frac{t^n}{n!}
\]
and
\[
\frac{d^2}{dt^2}\left(\frac{2e^t}{1+e^{-t}}\right)=
\sum_{n=0}^\infty (-1)^n E_{n+2}(-1)\frac{t^n}{n!} .
\]
Hence
\begin{multline*}
\int_0^\infty\frac{e^{-t}}{(1+e^{-t})^3}e^{-rt}dt
=\sum_{n=0}^\infty \frac{(-1)^n}4(E_{n+2}(-1)+E_{n+1}(-1))\int_0^\infty 
\frac{t^n}{n!}e^{-rt}dt\\
=\sum_{n=0}^\infty \frac{(-1)^n}4(E_{n+2}(-1)+E_{n+1}(-1))\frac1{r^{n+1}} ,
\end{multline*}
from which the conclusion follows.
\end{proof}
The well-known identity
\begin{equation}
\label{eulerid}
E_n(x)+E_n(x+1)=2x^n
\end{equation}
gives $E_n(-1)=2(-1)^n-E_n(0)$,
so that 
\[
E_{j+1}(-1)+E_{j+2}(-1)=-E_{j+1}(0)-E_{j+2}(0) .
\]
But $E_n(0)=0$ for $n$ even, so we have
\[
\be_j=\begin{cases} \frac14E_{j+2}(0),&\text{if $j$ is odd,}\\
-\frac14E_{j+1}(0),&\text{if $j$ is even,}\end{cases}
\]
or more succinctly $\be_j=(-1)^{j+1}\frac14E_{2\lfloor\frac{j+1}2\rfloor+1}(0)$.
If we set $a_n=\frac12E_{2n+1}(0)$, then 
$2(-1)^{j-1}\be_j=a_{\lfloor\frac{j+1}2\rfloor}$.
The $a_n$ can be written in terms of Bernoulli numbers as
\[
a_n=\frac{(1-2^{2n+2})B_{2n+2}}{2n+2},
\]
and we also have the exponential generating function
\[
\sum_{n=0}^\infty a_n\frac{t^{2n+1}}{(2n+1)!}=-\frac12\tanh\frac{t}2 .
\]
The first few $a_j$ are
\[
a_0=-\frac14,\ 
a_1=\frac18,\
a_2=-\frac14,\
a_3=\frac{17}{16},\
a_4=-\frac{31}4,\
a_5=\frac{691}8,\
a_6=-\frac{5461}4 .
\]
\par
Using Eq. (\ref{Ieq}) we can write
\begin{align*}
I(n)&=\frac34+2\sum_{k=1}^\infty \sum_{m=1}^\infty \sum_{j=1}^\infty
(-1)^{m-k}\frac{\zt_{m-1}(\{1\}_{k-1})\be_{j-1}}{n^{j+k}m^{j+1}}\\
&=\frac34+2\sum_{p=2}^\infty \frac1{n^p}\sum_{m=1}^\infty\sum_{k=1}^{p-1}
(-1)^{m-k}\frac{\zt_{m-1}(\{1\}_{k-1})\be_{p-k-1}}{m^{p-k+1}}\\
&=\frac34+2\sum_{p=2}^\infty\frac1{n^p}\sum_{k=1}^{p-1}(-1)^k\be_{p-k-1}
\zt(\overline{p-k+1},\{1\}_{k-1}) ,
\end{align*}
from which we see that $I_0=\frac34$, $I_1=0$, and 
\[
I_p=2\sum_{k=1}^{p-1}(-1)^k\be_{p-k-1}\zt(\overline{p-k+1},\{1\}_{k-1})
=2\sum_{j=2}^p(-1)^{p-j-1}\be_{j-2}\zt(\bar j,\{1\}_{p-j}) 
\]
for $p\ge 2$.  We have proved the following result.
\begin{thm}
\label{main}
For $p\ge 2$,
\[
I_p=(-1)^p\sum_{j=2}^p a_{\lfloor\frac{j-1}2\rfloor}\zt(\bar j,\{1\}_{p-j}),
\]
where $a_n=\frac12E_{2n+1}(0)=(1-2^{2n+2})B_{2n+2}/(2n+2)$.
\end{thm}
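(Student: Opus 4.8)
The plan is to combine Eq.~(\ref{Ieq}) with the integral expansion from Lemma~2 and then read off the coefficient of each power of $1/n$. Substituting $\int_0^1 u^{mn}(1+u)^{-3}\,du=\sum_{j\ge1}\be_{j-1}(mn)^{-j}$ into Eq.~(\ref{Ieq}) produces a triple sum over $k,m,j\ge1$ whose general term carries $n^{-(j+k)}$. Since both substituted expansions are of Watson type, I would interpret the outcome as an identity of asymptotic-series coefficients: for a fixed total power $p=j+k$ only the finitely many pairs with $k\in\{1,\dots,p-1\}$ and $j=p-k$ contribute, and for each such pair the remaining sum over $m$ converges. Collecting these gives the coefficient of $n^{-p}$ as $2\sum_{k=1}^{p-1}\be_{p-k-1}\sum_{m\ge1}(-1)^{m-k}\zt_{m-1}(\{1\}_{k-1})m^{-(p-k+1)}$.

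The conceptual step is to recognize each inner $m$-sum as an alternating multiple zeta value. Writing $(-1)^{m-k}=(-1)^k(-1)^m$ and unfolding $\zt_{m-1}(\{1\}_{k-1})=\sum_{m-1\ge n_2>\cdots>n_k\ge1}(n_2\cdots n_k)^{-1}$, I set $n_1=m$; the constraint becomes $n_1>n_2>\cdots>n_k\ge1$ and the summand becomes $(-1)^{n_1}n_1^{-(p-k+1)}(n_2\cdots n_k)^{-1}$. By the sign convention of the introduction — a bar over the $i$th slot inserts the factor $(-1)^{n_i}$ — this is exactly $(-1)^k\zt(\overline{p-k+1},\{1\}_{k-1})$, a convergent series because its leading argument is a barred integer and hence never an unbarred $1$. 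Thus $I_p=2\sum_{k=1}^{p-1}(-1)^k\be_{p-k-1}\zt(\overline{p-k+1},\{1\}_{k-1})$, which is the form recorded just before the theorem.

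It remains to pass to the stated form. Reindexing the $k$-sum by $j=p-k+1$ (so that the number of trailing ones is $k-1=p-j$ and $\be_{p-k-1}=\be_{j-2}$) turns this into $2\sum_{j=2}^p(-1)^{p-j-1}\be_{j-2}\zt(\bar j,\{1\}_{p-j})$. I would then invoke the relation $2(-1)^{j-1}\be_j=a_{\lfloor(j+1)/2\rfloor}$ derived above; with $j$ replaced by $j-2$ it reads $2\be_{j-2}=(-1)^{j-1}a_{\lfloor(j-1)/2\rfloor}$. Substituting and combining signs through $(-1)^{p-j-1}(-1)^{j-1}=(-1)^{p-2}=(-1)^p$ pulls out a single global factor $(-1)^p$ and yields $I_p=(-1)^p\sum_{j=2}^p a_{\lfloor(j-1)/2\rfloor}\zt(\bar j,\{1\}_{p-j})$, as claimed.

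I expect the genuine difficulty to be technical rather than conceptual. Because the coefficients $\be_j$ grow factorially, the two substituted expansions are only asymptotic, so the term-by-term extraction of $I_p$ must be justified as a manipulation of asymptotic series; in particular one wants uniform control of the Watson-type remainder in the integral expansion as $m$ varies, so that summing over $m$ preserves the asymptotic relation in $n$. The remaining risk is purely arithmetical: the two sign rearrangements — matching the truncated zeta against $\zt(\overline{p-k+1},\{1\}_{k-1})$ and performing the $\be\mapsto a$ conversion — must be tracked exactly, since a single stray sign would corrupt the global $(-1)^p$.
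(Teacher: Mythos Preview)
Your proposal is correct and follows essentially the same route as the paper: substitute the Lemma~2 expansion of $\int_0^1 u^{mn}(1+u)^{-3}\,du$ into Eq.~(\ref{Ieq}), collect by total power $p=j+k$, identify the resulting $m$-sum as $\zt(\overline{p-k+1},\{1\}_{k-1})$, reindex via $j=p-k+1$, and convert $\be_{j-2}$ to $a_{\lfloor(j-1)/2\rfloor}$. The only difference is that you flag the asymptotic-series bookkeeping explicitly, whereas the paper treats the coefficient extraction as a formal identity without further comment.
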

The first two cases are as follows.
\begin{align*}
I_2&=a_0\zt(\bar2)=\frac18\zt(2)\\
I_3&=-a_0\zt(\bar2,1)-a_1\zt(\bar3)=\frac14\cdot\frac{\zt(3)}8
+\frac18\cdot\frac34\zt(3)=\frac18\zt(3) .
\end{align*}
In all further computations, expressions for alternating multiple zeta values
are simplified using the Multiple Zeta Value Data Mine \cite{BBV}.
By Theorem \ref{main}, 
\[
I_4=a_0\zt(\bar2,1,1)+a_1\zt(\bar3,1)+a_2\zt(\bar4)=
-\frac14\zt(\bar2,1,1)+\frac18\zt(\bar3,1)+\frac18\zt(\bar4),
\]
and since $\zt(\bar4)=-\frac78\zt(4)$, $\zt(\bar2,1,1)=-\frac1{16}\zt(4)
+\frac12\zt(\bar3,1)$, this implies $I_4=-\frac{3}{32}\zt(4)$.
Similarly, 
\begin{multline*}
I_5=-a_0\zt(\bar2,1,1,1)-a_1\zt(\bar3,1,1)-a_1\zt(\bar4,1)-a_2\zt(\bar5)=\\
\frac14\zt(\bar2,1,1,1)-\frac18\zt(\bar3,1,1)-\frac18\zt(\bar4,1)+\frac14
\zt(\bar5) .
\end{multline*}
Now $\zt(\bar5)=-\frac{15}{16}\zt(5)$, and from \cite{BBV}
\begin{align*}
\zt(\bar4,1)&=-\frac{29}{32}\zt(5)+\frac12\zt(2)\zt(3)\\
\zt(\bar2,\{1\}_3)&=\frac{31}{64}\zt(5)-\frac14\zt(2)\zt(3)
+\frac12\zt(\bar3,1,1),
\end{align*}
giving the result $I_5=-\frac18\zt(2)\zt(3)$.
\par
Here, without further details, are $I_j$ for $j=6,7,8,9,10,11,12$.
\begin{align*}
I_6&=\frac{83}{256}\zt(6)-\frac1{16}\zt(3)^2\\
I_7&=\frac3{16}\zt(7)+\frac{27}{64}\zt(3)\zt(4)+\frac3{16}\zt(2)\zt(5)\\
I_8&=-\frac{2533}{1536}\zt(8)+\frac3{16}\zt(3)\zt(5)
+\frac5{32}\zt(2)\zt(3)^2\\
I_9&=-\frac56\zt(9)-\frac{289}{128}\zt(3)\zt(6)-\frac{135}{64}
\zt(4)\zt(5)-\frac98\zt(2)\zt(7)+\frac5{96}\zt(3)^3\\
I_{10}&=\frac{293937}{20480}\zt(10)-\frac{87}{32}\zt(3)\zt(7)-\frac9{16}\zt(5)^2
-\frac{81}{64}\zt(3)^2\zt(4)-\frac{21}{16}\zt(2)\zt(3)\zt(5)\\
I_{11}&=\frac{63}8\zt(11)+\frac{58007}{3072}\zt(3)\zt(8)
+\frac{5187}{256}\zt(5)\zt(6)+\frac{135}8\zt(4)\zt(7)
+\frac{115}{12}\zt(2)\zt(9)\\
&-\frac{13}{48}\zt(2)\zt(3)^3-\frac{21}{32}\zt(3)^2\zt(5)\\
I_{12}&=-\frac{2095281645}{11321344}\zt(12)+\frac{115}{12}\zt(3)\zt(9)
+\frac{81}8\zt(5)\zt(7)+\frac{5765}{512}\zt(3)^2\zt(6)\\
&+\frac{1323}{64}\zt(3)\zt(4)\zt(5)+\frac{45}4\zt(2)\zt(3)\zt(7)
+\frac{45}8\zt(2)\zt(5)^2-\frac{13}{192}\zt(3)^4
\end{align*}
\par
In fact, the $I_n$ are always rational polynomials in the ordinary
zeta values $\zt(i)$, in consequence of the following result.
\begin{thm}
\label{poly}
For $p\ge 2$,
\[
I_p=\frac{(-1)^p}2\sum_{k=1}^{p-1}(-1)^k\zt(k+1,\{1\}_{p-k-1})\sum_{j=0}^{k-1}
\binom{k-1}{j}a_{\lfloor \frac{p-1-j}2\rfloor} .
\]
\end{thm}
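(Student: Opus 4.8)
The plan is to take the alternating formula of Theorem \ref{main} as the starting point and to convert each value $\zt(\bar j,\{1\}_{p-j})$ into ordinary ones by Kölbig's identity, reorganizing the resulting finite double sum afterward. First I would recognize the height-one values as special values of Nielsen's generalized polylogarithm
\[
S_{n,q}(z)=\sum_{m\ge1}\frac{z^m}{m^{n+1}}\,\zt_{m-1}(\{1\}_{q-1}),
\]
so that, exactly as in the proof of Lemma \ref{stirmzv}, one has $\zt(n+1,\{1\}_{q-1})=S_{n,q}(1)$ and $\zt(\overline{n+1},\{1\}_{q-1})=S_{n,q}(-1)$. In this language Theorem \ref{main} reads $I_p=(-1)^p\sum_{j=2}^{p}a_{\lfloor(j-1)/2\rfloor}S_{j-1,\,p-j+1}(-1)$, and Theorem \ref{poly} asserts that this equals $\frac{(-1)^p}2\sum_{k=1}^{p-1}(-1)^k\,C_{p,k}\,S_{k,\,p-k}(1)$ with $C_{p,k}=\sum_{j=0}^{k-1}\binom{k-1}{j}a_{\lfloor(p-1-j)/2\rfloor}$.

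The decisive input is Kölbig's reduction of $S_{n,q}(-1)$ to the values $S_{\cdot,\cdot}(1)$ together with single zeta values. I would record it from the integral representation of $S_{n,q}(-1)$ and the factorization $\log(1+t)=\log(1-t^2)-\log(1-t)$: expanding $(\log(1+t))^{q}$ by the binomial theorem produces the coefficients that eventually appear as $\binom{k-1}{j}$, while the substitution $t\mapsto t^{2}$ needed to absorb the $\log(1-t^{2})$ factors contributes the halving $2^{-1}$ and an even/odd splitting of the summation index. It is precisely this parity splitting that, through the relation $a_n=\tfrac12E_{2n+1}(0)$ and the generating function $\sum_n a_n t^{2n+1}/(2n+1)!=-\tfrac12\tanh\tfrac t2$ recorded above, turns into the floor-shifted index $\lfloor(p-1-j)/2\rfloor$ and the overall factor $\tfrac12$.

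With Kölbig's relation in hand the remaining work is bookkeeping: substitute it for every $S_{j-1,p-j+1}(-1)$ in Theorem \ref{main}, interchange the finite order of summation, and collect the coefficient of each fixed $S_{k,p-k}(1)=\zt(k+1,\{1\}_{p-k-1})$. Writing the inner index as $J=p-j$ converts the $a$-index $\lfloor(J-1)/2\rfloor$ coming from Theorem \ref{main} into $\lfloor(p-1-j)/2\rfloor$, exactly the index in $C_{p,k}$; matching the binomial weights then yields the stated formula. That the output involves only the ordinary values $\zt(k+1,\{1\}_{p-k-1})$, each a rational polynomial in the $\zt(i)$ by the Aomoto--Drinfeld evaluation, is what gives the corollary announced in the introduction.

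I expect the main obstacle to be the parity bookkeeping together with the cancellation of the genuinely new periods. Individually the $\zt(\bar j,\{1\}_{p-j})$ need not reduce to ordinary zeta values (already $\zt(\bar3,1)$ involves $\Li_4(\tfrac12)$), so the reduction is not term-by-term: the irreducible contributions must cancel across the $a$-weighted sum. Tracking this is cleanest at the level of the generating function $G(x,y)=\sum_{n,q}S_{n,q}(-1)x^{n}y^{q}=-y\int_0^1 t^{-x}(1+t)^{-1-y}\,dt$, whose reflection $t\mapsto 1/t$ gives $G(x,y)+G(1-x-y,y)=-y\,\Gamma(1-x)\Gamma(x+y)/\Gamma(1+y)$; the right-hand side is a generating series in the ordinary $\zt(i)$, while the $(1+u)^{3}$ in $I(n)$ fixes the relevant weight $y=2$. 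The hard part will be to check coefficientwise that the $a$-weights annihilate the $G(1-x-y,y)$ contribution and reproduce exactly the rational numbers $C_{p,k}$; that is where the real effort lies.
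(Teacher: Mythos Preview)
Your plan has a real gap at the very first step. K\"olbig's identity, in the form the paper cites (Eq.~(\ref{kint})), expresses each \emph{ordinary} value $s_{j,p-j}=\zt(j+1,\{1\}_{p-j-1})$ as a linear combination of the alternating ones $\si_{i,p-i}=(-1)^{p-i}\zt(\overline{i+1},\{1\}_{p-i-1})$, not the reverse. You therefore cannot ``convert each value $\zt(\bar j,\{1\}_{p-j})$ into ordinary ones by K\"olbig's identity'' and substitute term by term. Worse, the $(p-1)\times(p-1)$ coefficient matrix $M_{ji}=\binom{p-i-1}{p-j-1}+\binom{p-i-1}{j-1}$ is singular for $p\ge 3$: swapping $j\leftrightarrow p-j$ leaves $M_{ji}$ unchanged, so rows $j$ and $p-j$ coincide (this is just the duality $s_{j,p-j}=s_{p-j,j}$). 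Hence K\"olbig's relation alone cannot isolate any individual $\si_{i,p-i}$, and your anticipated cancellation of the $\Li_4(\tfrac12)$-type pieces has no mechanism here. The side remarks are also off: your integral $G(x,y)=-y\int_0^1 t^{-x}(1+t)^{-1-y}\,dt$ has $(1+t)^{-1}$ rather than $t^{-1}$ in the kernel and so does not generate the $S_{n,q}(-1)$; and the exponent in $(1+u)^3$ plays no role in this theorem, so ``$y=2$'' is a red herring.

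The paper's argument avoids inversion entirely. It treats the target vector $\bigl((-1)^i a_{\lfloor i/2\rfloor}\bigr)_{1\le i\le p-1}$ and asks for row coefficients $\rho_j$ with $\sum_j\rho_j M_{ji}=(-1)^i a_{\lfloor i/2\rfloor}$, so that $I_p=\sum_j\rho_j s_{j,p-j}$. Because the rows satisfy $M_{ji}=M_{(p-j)i}$, the natural ansatz $\rho_{p-j}=\rho_j$ halves the system to Eq.~(\ref{symsys}), which is lower-triangular in the Pascal matrix and is solved by binomial inversion, giving $\rho_k=\tfrac{(-1)^{p+k}}2\sum_{j=0}^{k-1}\binom{k-1}{j}a_{\lfloor(p-1-j)/2\rfloor}$. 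The only nontrivial step is then checking that extending this formula to all $1\le k\le p-1$ is consistent with $\rho_k=\rho_{p-k}$; this reduces to the Euler-polynomial identity of Lemma~\ref{eulerz}, proved by a short induction from $E_n(x)+E_n(x+1)=2x^n$. That lemma is the piece doing the work you expected ``parity bookkeeping'' to do, and it is what your outline is missing.
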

The proof makes use of an identity of K\"olbig \cite{K}, which is phrased
in terms of the integral
\[
S_{n,p}(z)=
\frac{(-1)^{n+p-1}}{(n-1)!p!}\int_0^1\frac{\log^{n-1}(t)\log^p(1-zt)}{t}dt.
\]
But $S_{n,p}(z)$ can be written as a multiple zeta value if $z=1$,
and as an alternating multiple zeta value if $z=-1$.  The key is the 
following result.
\begin{lem}
If $|z|\le 1$, then
\[
\frac{(-1)^{n+p-1}}{(n-1)!p!}\int_0^1\frac{\log^{n-1}(t)\log^p(1-zt)}{t}dt
=\sum_{j_1>j_2>\cdots>j_p\ge 1}\frac{z^{j_1}}{j_1^{n+1}j_2\cdots j_p} .
\]
\end{lem}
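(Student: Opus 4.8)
The plan is to expand the integrand as a power series in $zt$, integrate term by term, and then recognize the resulting coefficients as truncated multiple zeta values via Lemma \ref{stirmzv}. First I would expand $\log^p(1-zt)$ using the identity (\ref{logp}): setting $x=-zt$ there gives
\[
\log^p(1-zt)=p!\sum_{m\ge p}\frac{(-1)^m(zt)^m}{m!}s(m,p),
\]
where the sum starts at $m=p$ because $s(m,p)=0$ for $m<p$. Dividing by $t$ turns this into a power series whose general term is a multiple of $t^{m-1}$.

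Next I would use the elementary evaluation $\int_0^1 t^{m-1}\log^{n-1}(t)\,dt=(-1)^{n-1}(n-1)!/m^n$ (obtained either by differentiating $\int_0^1 t^{s-1}dt=1/s$ repeatedly in $s$, or via the substitution $t=e^{-s}$). Integrating the series term by term then yields
\[
\int_0^1\frac{\log^{n-1}(t)\log^p(1-zt)}{t}dt=p!\,(n-1)!\,(-1)^{n-1}\sum_{m\ge p}\frac{(-1)^m z^m s(m,p)}{m!\,m^n}.
\]
Multiplying by the prefactor $\frac{(-1)^{n+p-1}}{(n-1)!\,p!}$ and collapsing the sign $(-1)^{n+p-1}(-1)^{n-1}=(-1)^p$ reduces the left-hand side to $\sum_{m\ge p}(-1)^{m+p}z^m s(m,p)/(m!\,m^n)$.

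Then I would substitute Lemma \ref{stirmzv}, namely $s(m,p)=(-1)^{m-p}(m-1)!\,\zt_{m-1}(\{1\}_{p-1})$. Since $(-1)^{m+p}(-1)^{m-p}=1$ and $(m-1)!/m!=1/m$, the expression collapses to $\sum_{m\ge p}\frac{z^m}{m^{n+1}}\zt_{m-1}(\{1\}_{p-1})$. Writing out the truncated zeta value as $\sum_{m-1\ge j_2>\cdots>j_p\ge 1}1/(j_2\cdots j_p)$ and renaming $m=j_1$ (so that $m-1\ge j_2$ becomes $j_1>j_2$, and the constraint $j_1\ge p$ is then automatic) produces exactly the nested sum on the right-hand side of the statement.

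The only genuinely analytic point, and hence the main obstacle, is justifying the term-by-term integration. For $|z|<1$ the series for $\log^p(1-zt)/t$ converges uniformly on $[0,1]$, so the interchange is immediate and the identity holds throughout the open disk. To reach the boundary $|z|=1$ I would argue by continuity: the nested series on the right converges absolutely for all $|z|\le 1$, because $\zt_{m-1}(\{1\}_{p-1})$ grows only like $(\log m)^{p-1}$ while the leading exponent $n+1\ge 2$ makes $\sum_m (\log m)^{p-1}/m^{n+1}$ convergent; and the integral on the left is continuous in $z$ on the closed disk by dominated convergence, since the integrand is bounded by a fixed integrable function (near $t=0$ the factor $\log^p(1-zt)$ vanishes to order $p$, cancelling the $1/t$, while near $t=1$ one has the uniform bound $|1-zt|\ge 1-t$, leaving only an integrable logarithmic singularity). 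An Abel-type passage to the boundary then extends the identity from $|z|<1$ to all $|z|\le 1$.
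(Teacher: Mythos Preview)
Your proof is correct and follows the same overall strategy as the paper's---expand the integrand as a power series and integrate term by term---but differs in the combinatorial step. The paper expands $\log^p(1-zt)$ as the $p$-fold product $(-1)^p\sum_{i_1,\dots,i_p\ge 1}\frac{(zt)^{i_1+\dots+i_p}}{i_1\cdots i_p}$, integrates, and then invokes \cite[Lemma~4.3]{H92} to convert the resulting symmetric sum $\sum_{i_1,\dots,i_p}\frac{z^{i_1+\dots+i_p}}{i_1\cdots i_p(i_1+\dots+i_p)^n}$ into $p!$ times the ordered sum over $j_1>\dots>j_p$. You instead apply the Stirling-number expansion (\ref{logp}) directly to $\log^p(1-zt)$ and then use Lemma~\ref{stirmzv} to turn $s(m,p)$ into the truncated multiple zeta value. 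Your route is more self-contained, since it stays entirely within identities already established in the paper and avoids the external citation; the two arguments are of course equivalent, since Eq.~(\ref{logp}) together with Lemma~\ref{stirmzv} is precisely a repackaging of the symmetrization identity. You also supply the justification for termwise integration and the continuity argument extending the identity to $|z|=1$, points the paper passes over in silence.
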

\begin{proof}
Since
\[
\log(1-zt)=-\sum_{i\ge 1}\frac{z^it^i}{i}\quad\text{and}\quad
\int_0^1t^{m-1}\log^{n-1}(t)dt=\frac{(n-1)!}{m^n},
\]
we have
\begin{multline*}
\int_0^1\frac{\log^{n-1}(t)\log^p(1-zt)}{t}dt\\
=(-1)^p\sum_{i_1=1}^\infty\sum_{i_2=1}^\infty\cdots\sum_{i_p=1}^\infty
\int_0^1\frac{z^{i_1+\dots+i_p}t^{i_1+\dots+i_p-1}
\log^{n-1}(t)}{i_1i_2\cdots i_p}dt\\
=(-1)^p\sum_{i_1=1}^\infty\sum_{i_2=1}^\infty\cdots\sum_{i_p=1}^\infty
\frac{(-1)^{n-1}(n-1)!z^{i_1+\dots+i_p}}{i_1i_2\cdots i_p(i_1+\dots+i_p)^n} .
\end{multline*}
By \cite[Lemma 4.3]{H92}, this is 
\[
(-1)^p\sum_{j_1>j_2>\dots>j_p\ge 1}\frac{(-1)^{n-1}(n-1)!p!z^{j_1}}{j_1^{n+1}j_2\cdots j_p}
\]
and the conclusion follows.
\end{proof}
It then follows from definitions that
\[
S_{n,p}(1)=\zt(n+1,\{1\}_{p-1})\quad\text{and}\quad
S_{n,p}(-1)=\zt(\overline{n+1},\{1\}_{p-1}) .
\]
In \cite{K} K\"olbig refers to $S_{n,p}(1)$ as $s_{n,p}$ and $S_{n,p}(-1)$
as $(-1)^p\si_{n,p}$; the result we need is \cite[Theorem 3]{K}, which reads
\begin{equation}
\label{kint}
\sum_{j=1}^n\binom{n+p-j-1}{p-1}\si_{j,n+p-j}
+\sum_{j=1}^p\binom{n+p-j-1}{n-1}\si_{j,n+p-j}=s_{n,p} .
\end{equation}
\begin{proof1}
Note that we can rewrite Theorem \ref{main} as 
\[
I_p=\sum_{i=1}^{p-1}(-1)^ia_{\lfloor\frac{i}2\rfloor}\si_{i,p-i}
\]
and Eq. (\ref{kint}) as
\[
\sum_{i=1}^{p-1}\left(\binom{p-i-1}{p-j-1}+\binom{p-i-1}{j-1}\right)
\si_{i,p-i} = s_{j,p-j} .
\]
If we can find $\rho_j$ so that
\begin{multline*}
\sum_{j=1}^{p-1}\rho_js_{j,p-j}
=\sum_{j=1}^{p-1}\sum_{i=1}^{p-1}
\rho_j\left(\binom{p-i-1}{p-j-1}+\binom{p-i-1}{j-1}\right)\si_{i,p-i}\\
=\sum_{i=1}^{p-1}\si_{i,p-i}\sum_{j=1}^{p-1}
\rho_j\left(\binom{p-i-1}{p-j-1}+\binom{p-i-1}{j-1}\right)
=I_p,
\end{multline*}
i.e.,
\begin{equation}
\label{system}
\sum_{j=1}^{p-1}\rho_j\left(\binom{p-i-1}{p-j-1}+\binom{p-i-1}{j-1}\right)
=(-1)^ia_{\lfloor\frac{i}2\rfloor}
\end{equation}
for $i=1,2,\dots,p-1$, then $I_p$ can be written in terms of the
$s_{m,n}$.
Now Eqs. (\ref{system}) can be written
\[
\sum_{j=1}^{p-i}\rho_{p-j}\binom{p-i-1}{j-1}+
\sum_{j=1}^{p-i}\rho_j\binom{p-i-1}{j-1}=(-1)^ia_{\lfloor\frac{i}2\rfloor} ,\
1\le i\le p-1,
\]
and if we make the condition $\rho_{p-j}=\rho_j$, this becomes
\begin{equation}
\label{symsys}
\sum_{j=1}^{p-i}\rho_j\binom{p-i-1}{j-1}=\frac{(-1)^i}2a_{\lfloor\frac{i}2
\rfloor} ,\ 1\le i\le p-1,
\end{equation}
Restrict the system (\ref{symsys}) to the last $\lfloor\frac{p}2\rfloor$ 
equations ($i=\lfloor\frac{p+1}2\rfloor,\dots,p-1$) and use binomial 
inversion to get
\begin{equation}
\label{soln}
\rho_k=\frac{(-1)^{p+k}}{2}\sum_{j=0}^{k-1}a_{\lfloor\frac{p-j-1}2 \rfloor}
\binom{k-1}{j},\ 1\le k\le \lfloor\tfrac{p}2\rfloor .
\end{equation}
We claim that $\rho_k$ so defined, if the definition is extended to 
$1\le k\le p-1$, is also a solution of the first $\lfloor\frac{p-1}2\rfloor$
equations of (\ref{symsys}).  The conclusion then follows.
\par
To prove the claim, it is enough to show that the extension of 
Eqn. (\ref{soln}) to $1\le k\le p-1$ is consistent with the condition 
$\rho_{p-k}=\rho_k$, i.e., that
\[
\frac{(-1)^k}{2}\sum_{j=0}^{p-k-1}a_{\lfloor\frac{p-j-1}2\rfloor}\binom{p-k-1}{j}=
\frac{(-1)^{p+k}}{2}\sum_{j=0}^{k-1}a_{\lfloor\frac{p-j-1}2\rfloor}\binom{k-1}{j},
\]
or, using the definition of $a_n$,
\[
\sum_{j=0}^{p-k-1}E_{2\lfloor\frac{p-j-1}2\rfloor+1}(0)\binom{p-k-1}{j}=
(-1)^p\sum_{j=0}^{k-1}E_{2\lfloor\frac{p-j-1}2\rfloor+1}(0)\binom{k-1}{j} .
\]
By considering the cases $p$ odd and $p$ even, we see this can be written
\[
\sum_{j=0}^{p-k}E_{p-j}(0)\binom{p-k}{j}=(-1)^p\sum_{j=0}^k E_{p-j}(0)\binom{k}{j}.
\]
The result then follows from taking $n=p-k$ in Lemma \ref{eulerz} below.
\end{proof1}
\begin{lem}
\label{eulerz}
For nonnegative integers $n,k$,
\[
\sum_{j=0}^nE_{k+j}(0)\binom{n}{j}=(-1)^{n+k}\sum_{j=0}^kE_{n+j}(0)\binom{k}{j}
\]
\end{lem}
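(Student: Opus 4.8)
The plan is to prove the identity with exponential generating functions, encoding the two free indices $n$ and $k$ simultaneously. Write $g(t)=\sum_{m\ge 0}E_m(0)\frac{t^m}{m!}$; by Eq.~(\ref{EulerGF}) evaluated at $x=0$ this is simply $g(t)=\frac{2}{1+e^t}$. Abbreviate the left-hand side as $A(n,k)=\sum_{j=0}^nE_{k+j}(0)\binom nj$, so that the assertion to be proved is the symmetry $A(n,k)=(-1)^{n+k}A(k,n)$.

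First I would fix $k$ and read off the generating function in $n$. Since differentiation of $g$ shifts the index, $g^{(k)}(t)=\sum_{j\ge 0}E_{k+j}(0)\frac{t^j}{j!}$, and for each fixed $k$ the sequence $n\mapsto A(n,k)$ is the binomial transform of $j\mapsto E_{k+j}(0)$. As the binomial transform multiplies an exponential generating function by $e^t$, this gives
\[
\sum_{n\ge 0}A(n,k)\frac{t^n}{n!}=e^t\,g^{(k)}(t).
\]
I would then assemble the double generating function $F(x,y)=\sum_{n,k\ge 0}A(n,k)\frac{x^n}{n!}\frac{y^k}{k!}$. Summing the previous display against $\frac{y^k}{k!}$ and using Taylor's theorem in the form $\sum_{k\ge 0}g^{(k)}(x)\frac{y^k}{k!}=g(x+y)$ collapses the whole double sum to a closed form,
\[
F(x,y)=e^x\,g(x+y)=\frac{2e^x}{1+e^{x+y}}.
\]

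The crux is to recognize that the claimed symmetry is precisely the functional equation $F(x,y)=F(-y,-x)$. Indeed, after substituting $x\mapsto-x$, $y\mapsto-y$ and renaming the two summation indices, one finds $\sum_{n,k}(-1)^{n+k}A(k,n)\frac{x^n}{n!}\frac{y^k}{k!}=F(-y,-x)$, so that $A(n,k)=(-1)^{n+k}A(k,n)$ for all $n,k$ is equivalent to $F(x,y)=F(-y,-x)$. I expect this reinterpretation of the target identity as a single functional equation to be the only genuinely delicate point; once it is set up, the verification is a one-line computation. Writing $F(-y,-x)=\frac{2e^{-y}}{1+e^{-x-y}}$ and multiplying numerator and denominator by $e^{x+y}$ returns $\frac{2e^x}{1+e^{x+y}}=F(x,y)$. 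Equating the coefficients of $\frac{x^n}{n!}\frac{y^k}{k!}$ on the two sides of $F(x,y)=F(-y,-x)$ then yields the lemma.
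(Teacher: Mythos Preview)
Your proof is correct and is a genuinely different argument from the paper's. The paper proceeds by induction on $k$: the base case $k=0$ comes from setting $x=0$ in $E_n(x)+E_n(x+1)=2x^n$, which gives $\sum_{j=0}^nE_j(0)\binom nj=-E_n(0)=(-1)^nE_n(0)$, and the inductive step splits $\binom{k+1}{j}=\binom kj+\binom k{j-1}$ and applies the induction hypothesis twice (once with $n$ and once with $n+1$). Your approach instead packages both free indices into a bivariate exponential generating function, obtains the closed form $F(x,y)=\dfrac{2e^x}{1+e^{x+y}}$, and reduces the lemma to the functional equation $F(x,y)=F(-y,-x)$, which is a one-line check.

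The two arguments trade off in the expected way. The induction is entirely elementary and keeps all analytic questions out of sight, but it requires a small amount of index bookkeeping and the base case already uses the Euler-polynomial identity~(\ref{eulerid}). Your generating-function argument is shorter and more conceptual: once one recognizes that multiplication by $e^t$ implements the binomial transform and that $\sum_k g^{(k)}(x)\frac{y^k}{k!}=g(x+y)$ holds as an identity of formal power series, the symmetry drops out of the explicit closed form with no induction at all. In particular your method never needs the identity~(\ref{eulerid}) or the fact that $E_n(0)=0$ for even $n$; it only uses the generating function~(\ref{EulerGF}) at $x=0$.
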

\begin{proof}
Start with 
\[
\sum_{j=0}^nE_j(0)\binom{n}{j}=-E_n(0)
\]
which follows from setting $x=0$ in the identity (\ref{eulerid}).
Since $E_n(0)=0$ for $n$ even, we can write this as 
\[
\sum_{j=0}^nE_j(0)\binom{n}{j}=(-1)^nE_n(0), 
\]
which is the case $k=0$ of the conclusion.  We can then use it as
the base case of a proof of the conclusion by induction on $k$.
We have
\begin{multline*}
(-1)^{n+k+1}\sum_{j=0}^{k+1}E_{n+j}(0)\binom{k+1}{j}=\\
(-1)^{n+k+1}\left[\sum_{j=1}^{k+1}E_{n+j}(0)\binom{k}{j-1}
+\sum_{j=0}^kE_{n+j}(0)\binom{k}{j}\right]=\\
(-1)^{n+k+1}\left[\sum_{j=0}^kE_{n+1+j}(0)\binom{k}{j}
+\sum_{j=0}^kE_{n+j}(0)\binom{k}{j}\right]=\\
\sum_{j=0}^{n+1}E_{k+j}(0)\binom{n+1}{j}
-\sum_{j=0}^nE_{k+j}(0)\binom{n}{j}=
\sum_{j=1}^{n+1}E_{k+j}(0)\binom{n}{j-1}\\
=\sum_{j=0}^nE_{k+1+j}(0)\binom{n}{j}.
\end{multline*}
\end{proof}
\begin{coroll}
For $p\ge 2$, $I_p$ is a rational polynomial in the the $\zt(i)$.
\end{coroll}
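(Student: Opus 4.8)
The plan is to combine Theorem~\ref{poly} with the classical fact that the height-one multiple zeta values $\zt(m,\{1\}_r)$ are themselves rational polynomials in the ordinary zeta values. By Theorem~\ref{poly}, $I_p$ is a $\mathbb{Q}$-linear combination of the values $\zt(k+1,\{1\}_{p-k-1})=s_{k,p-k}$ for $1\le k\le p-1$, the coefficients being sums of products of the rational numbers $a_n$ with binomial coefficients. Hence it suffices to show that each individual value $\zt(m,\{1\}_r)$ with $m\ge 2$ and $r\ge 0$ lies in $\mathbb{Q}[\zt(2),\zt(3),\dots]$; the rest is bookkeeping.

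For this step I would invoke the classical generating-function identity (see, e.g., \cite{BBB})
\[
\sum_{m\ge 2,\ r\ge 0}\zt(m,\{1\}_r)\,x^{m-1}y^{r+1}
= 1-\frac{\Ga(1-x)\,\Ga(1-y)}{\Ga(1-x-y)} .
\]
Taking logarithms on the right and using $\log\Ga(1-t)=\ga t+\sum_{k\ge 2}\frac{\zt(k)}{k}t^k$, the Euler--Mascheroni contributions $\ga\bigl(x+y-(x+y)\bigr)$ cancel and one obtains
\[
\log\frac{\Ga(1-x)\,\Ga(1-y)}{\Ga(1-x-y)}
=\sum_{k\ge 2}\frac{\zt(k)}{k}\bigl(x^k+y^k-(x+y)^k\bigr),
\]
a formal power series in $x,y$ each of whose coefficients is $\mathbb{Q}$-linear in the $\zt(k)$. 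Exponentiating and subtracting from $1$ then yields a power series every Taylor coefficient of which is a rational polynomial in the $\zt(k)$, and extracting the coefficient of $x^{m-1}y^{r+1}$ exhibits $\zt(m,\{1\}_r)$ as such a polynomial.

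Putting the two ingredients together, $I_p$ is a finite $\mathbb{Q}$-linear combination of rational polynomials in the $\zt(i)$, hence is itself a rational polynomial in the $\zt(i)$, which is the assertion. The one substantive point is the polynomiality of the height-one values $\zt(m,\{1\}_r)$, which I have isolated as the generating-function computation above; alternatively it can be read off directly from K\"olbig's explicit evaluations of $s_{n,p}=S_{n,p}(1)$ in \cite{K}, which the paper already uses, so the corollary requires no tools beyond those assembled for Theorem~\ref{poly}.
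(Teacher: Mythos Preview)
Your argument is correct and follows the same route as the paper: invoke Theorem~\ref{poly} to express $I_p$ as a $\mathbb{Q}$-linear combination of the height-one values $\zt(k+1,\{1\}_{p-k-1})$, then cite the classical generating-function identity from \cite{BBB} to conclude that each such value is a rational polynomial in the $\zt(i)$. The only difference is that you spell out the $\log\Ga$ computation behind that identity, whereas the paper simply appeals to \cite[Eq.~(10)]{BBB}.
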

\begin{proof}
For any positive integers $n,m$ the multiple zeta value $\zt(n+1,\{1\}_m)$ is 
a rational polynomial in the $\zt(i)$, as follows from \cite[Eq. (10)]{BBB}.
Then Theorem \ref{poly} implies the conclusion.
\end{proof}
\section{Applications: convergence of norms}
Let $U=\Unif[0,1]$ denote a standard uniformly distributed random variable.
Furthermore, for positive real $n$ we define random variables $Z_n$ by
\[
Z_n=\Vert(U,1-U)\Vert_n=\big(U^n+(1-U)^n\big)^{\frac1n}.
\]
From the theory of norms we expect that the limit $Z_\infty$ exists and
\[
Z_\infty=\Vert(U,1-U)\Vert_\infty=\max\{U,1-U\}.
\]
It is known that $\max\{U,1-U\}=\Unif[\frac12,1]$. It turns out that our 
previous considerations allow to refine this intuition.
The integral $I(n)$ treated in detail before is exactly the expected value 
of $Z_n$. 
In the following we give asymptotic expansion of all positive real moments 
of $Z_n$. 

\begin{thm}
\label{TheoProb1}
The random variable $Z_n$, defined in terms of $U=\Unif[0,1]$, converges for $n\to\infty$ in distribution and with convergence of all integer moments,
\[
Z_n=\big(U^n+(1-U)^n\big)^{\frac1n}\to Z_\infty=\max\{U,1-U\},
\]
For positive integer $s\ge 1$ we have 
\begin{multline*}
\E(Z_n^s)=\frac{2(1-\frac{1}{2^{s+1}})}{s+1}+\\
\sum_{p=2}^\infty\frac{(-1)^p}{n^p}\sum_{k=1}^{p-1}\frac{s^{k}}{s+1}\sum_{j=1}^{s+1}\gamma_{s+1,j}(-1)^{j-1} E_{p-k+j-1}(0)\zt(\overline{p+1-k},\{1\}_{k-1}),
\end{multline*}
where the values $\gamma_{s+1,j}$ are given by $\frac{(-1)^{j-1}\stirone{s+1}{j}}{s!}=(-1)^{j-1}\zt_s(\{1\}_{j-1})$.

\smallskip

For arbitrary positive real $s>0$ we have
\begin{align*}
\E(Z_n^s)&=\frac{2(1-\frac{1}{2^{s+1}})}{s+1}+\sum_{p=2}^\infty \frac{(-1)^p}{n^p}\sum_{k=1}^{p-1}\frac{s^k}{s+1}\zt(\overline{p+1-k},\{1\}_{k-1})\\
&\quad\times \sum_{\ell=1}^{p-k}\fallfak{(s+1)}{\ell}B_{p-k,\ell}(E_1(0),\dots,E_{p-k-\ell+1}(0)),
\end{align*}
where $B_{n,k}(x_1,\dots,x_{n+1-k})$ denote the Bell polynomials.
\end{thm}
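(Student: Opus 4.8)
The plan is to compute $\E(Z_n^s)=\int_0^1[x^n+(1-x)^n]^{s/n}\,dx$ along exactly the route used for $I(n)=\E(Z_n)$ in Section 2, now carrying the exponent $s$ throughout. I would first settle the convergence statement separately from the expansion: for $U\neq\tfrac12$ one has $Z_n=(U^n+(1-U)^n)^{1/n}\to\max\{U,1-U\}$ pointwise, and the two-sided bound $\max\{U,1-U\}\le Z_n\le 2^{1/n}\max\{U,1-U\}\le 2$ shows the family $\{Z_n^s\}$ is uniformly bounded, hence dominated; dominated convergence then yields convergence in distribution together with convergence of all moments. Since $\max\{U,1-U\}=\Unif[\tfrac12,1]$, the limiting moment is $\int_{1/2}^1 2x^s\,dx=\frac{2(1-2^{-(s+1)})}{s+1}$, precisely the $p=0$ term of both asserted expansions.

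For the expansion itself, the symmetrization about $x=\tfrac12$ and the substitution $u=x/(1-x)$ of Section 2 give $\E(Z_n^s)=2\int_0^1(1+u^n)^{s/n}(1+u)^{-(s+2)}\,du$. Writing $(1+u^n)^{s/n}=\exp\!\big(\tfrac{s}{n}\log(1+u^n)\big)$, expanding the exponential, and inserting (\ref{logp}) together with Lemma \ref{stirmzv} reproduces the derivation of (\ref{Ieq}) verbatim except that the $k$th term now carries a factor $s^k$; the constant term is $2\int_0^1(1+u)^{-(s+2)}\,du=\frac{2(1-2^{-(s+1)})}{s+1}$. Introducing the generalized coefficients $\be_q^{(s)}$ through $\int_0^1 u^{r}(1+u)^{-(s+2)}\,du=\sum_{q\ge0}\be_q^{(s)}\,r^{-(q+1)}$ and collecting powers of $1/n$ (with $p=k+q+1$), the inner $m$-sum again telescopes into an alternating multiple zeta value by
\[
\sum_{m\ge1}\frac{(-1)^{m-k}}{m^{p-k+1}}\zt_{m-1}(\{1\}_{k-1})=(-1)^k\zt(\overline{p-k+1},\{1\}_{k-1}).
\]
This yields the common skeleton of both formulas and reduces everything to evaluating $\be_q^{(s)}$.

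The crux is therefore the closed form of $\be_q^{(s)}$, generalizing the Section-2 evaluation of the $\be_j$. The substitution $u=e^{-t}$ turns the relevant integral into $\int_0^\infty g(t)e^{-rt}\,dt$ with $g(t)=e^{-t}(1+e^{-t})^{-(s+2)}$, so by Watson's lemma $\be_q^{(s)}=g^{(q)}(0)$. The key identity is $g(t)=\tfrac{1}{s+1}\tfrac{d}{dt}\big[(1+e^{-t})^{-(s+1)}\big]$; setting $h(t)=(1+e^{-t})^{-1}=\tfrac12\mathcal{E}(-t,0)$ via (\ref{EulerGF}) gives $h(0)=\tfrac12$ and $h^{(i)}(0)=\tfrac12(-1)^iE_i(0)$. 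For real $s$ I would differentiate $h^{s+1}$ by Faà di Bruno: the outer map $w\mapsto w^{s+1}$ supplies the falling factorials $\fallfak{(s+1)}{\ell}$, the inner derivatives feed the Bell polynomials $B_{p-k,\ell}$, and the homogeneity relation $B_{m,\ell}(ab\,x_1,ab^2x_2,\dots)=a^\ell b^m B_{m,\ell}(x_1,x_2,\dots)$ with $a=\tfrac12$, $b=-1$ lets one pull the constants out so that the arguments become the bare $E_i(0)$; reindexing $q=p-k-1$ then produces the Bell-polynomial formula. For integer $s$ this expression collapses: the $\ell$-sum $\sum_\ell\fallfak{(s+1)}{\ell}B_{p-k,\ell}(E_1(0),\dots)$ is the $(p-k)$th derivative at $0$ of $\mathcal{E}(t,0)^{s+1}=(2/(1+e^t))^{s+1}$, and expanding this $(s+1)$st power and using $\stirone{s+1}{j}/s!=\zt_s(\{1\}_{j-1})$ (a direct consequence of Lemma \ref{stirmzv}) rewrites $\be_q^{(s)}$ as the single sum over $j$ with the coefficients $\gamma_{s+1,j}$, giving the integer-$s$ formula.

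The step I expect to be the main obstacle is exactly this evaluation of $\be_q^{(s)}$ and the reconciliation of its two guises. All of the sign bookkeeping, and in particular the powers of $\tfrac12$ arising from $h(0)=\tfrac12$ and from the rescaling of the Bell-polynomial arguments, must be tracked with care; the cleanest safeguard is to check the resulting coefficients against the already-known values $I_2=-\tfrac14\zt(\bar2)$ and $I_3=\tfrac14\zt(\bar2,1)-\tfrac18\zt(\bar3)$ of Theorem \ref{main} (the case $s=1$), which pin down every constant. A secondary issue is rigor: the $1/n$-series is asymptotic rather than convergent, so the interchange of summation and integration and the appeal to Watson's lemma should be made precise by truncating with an explicit remainder rather than manipulating formally convergent series.
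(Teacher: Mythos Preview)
Your proposal follows essentially the same route as the paper: symmetrize, substitute $u=x/(1-x)$, expand $\exp(\tfrac{s}{n}\log(1+u^n))$, apply Lemma~\ref{stirmzv}, and reduce everything to the asymptotic expansion of $\int_0^1 u^{mn}(1+u)^{-(s+2)}\,du$ via $u=e^{-t}$. Your derivative identity $g(t)=\frac{1}{s+1}\frac{d}{dt}(1+e^{-t})^{-(s+1)}$ is exactly the paper's integration by parts, and your Fa\`a di Bruno/Bell-polynomial treatment of $(1+e^{-t})^{-(s+1)}$ for real $s$ matches the paper's expansion $(1+(\mathcal{E}(-t,0)-1))^{s+1}=\sum_j\fallfak{(s+1)}{j}(\mathcal{E}(-t,0)-1)^j/j!$. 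Your pointwise/dominated-convergence argument for $Z_n\to Z_\infty$ is in fact cleaner than what the paper writes out.

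The one place where your sketch is thinner than the paper is the integer-$s$ reduction. You say the $\ell$-sum ``collapses'' by ``expanding the $(s+1)$st power'' so that the $\gamma_{s+1,j}$ appear; but it is not immediate why $(1+e^{-t})^{-(s+1)}$ should be a \emph{linear} combination of derivatives $h^{(j-1)}$ of $h(t)=(1+e^{-t})^{-1}$ with coefficients $(-1)^{j-1}\stirone{s+1}{j}/s!$. The paper makes this precise via the derivative-polynomial structure of the logistic function: $h^{(r-1)}=\sum_j(-1)^{j-1}(j-1)!\stirtwo{r}{j}h^j$, and then the Stirling inversion $\sum_r\stirtwo{r}{j}\stirone{s+1}{r}(-1)^{r-1}/s!=\delta_{j,s+1}/((-1)^{j-1}(j-1)!)$ solves the triangular system for the $\gamma_{s+1,j}$. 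Your argument would go through once you insert this step (or, equivalently, invoke the known identity $E_k^{(s+1)}(0)=2^{s}\sum_{j}(-1)^{j-1}\gamma_{s+1,j}E_{k+j-1}(0)$), but as written it is the point where a reader cannot reconstruct the formula from your outline alone.
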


A first by product of our moment expansions is a rate of convergence. 

\begin{coroll}
The distribution functions $F_n(x)=\mathbb{P}\{Z_n\le x\}$ and $F_{\infty}(x)=\mathbb{P}\{Z_\infty\le x\}$ satisfy
\label{CorollProb1}
\[
\sup_{x\in \R}| F_{n}(x)-F_{\infty}(x)| \le \frac{C}{n}. 
\]
\end{coroll}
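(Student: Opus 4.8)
The plan is to bypass the moment expansions of Theorem \ref{TheoProb1} entirely and instead exploit an almost-sure (pointwise) bound between $Z_n$ and $Z_\infty$, which is available here precisely because both are norms of the \emph{same} random vector $(U,1-U)$. The elementary norm comparison $\Vert v\Vert_\infty\le\Vert v\Vert_n\le d^{1/n}\Vert v\Vert_\infty$ on $\R^d$, applied with $d=2$ to $v=(U,1-U)$, gives the deterministic sandwich
\[
Z_\infty\le Z_n\le 2^{1/n}\,Z_\infty
\]
valid for every realisation of $U$. Since $Z_\infty=\max\{U,1-U\}\le 1$, subtracting yields $0\le Z_n-Z_\infty\le(2^{1/n}-1)Z_\infty\le 2^{1/n}-1$, and because $2^{1/n}-1=e^{(\log 2)/n}-1\le(e-1)(\log 2)/n$ for $n\ge 1$ (convexity of $e^t-1$ on $[0,1]$), we obtain the pointwise estimate $0\le Z_n-Z_\infty\le C_0/n$ with $C_0=(e-1)\log 2$.

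First I would convert this almost-sure bound into a two-sided comparison of the distribution functions. The left inequality $Z_n\ge Z_\infty$ gives $\{Z_n\le x\}\subseteq\{Z_\infty\le x\}$, hence $F_n(x)\le F_\infty(x)$. The right inequality $Z_n\le Z_\infty+C_0/n$ gives $\{Z_\infty\le x-C_0/n\}\subseteq\{Z_n\le x\}$, hence $F_\infty(x-C_0/n)\le F_n(x)$. Together these would produce, for all $x\in\R$,
\[
F_\infty\!\left(x-\tfrac{C_0}{n}\right)\le F_n(x)\le F_\infty(x),
\]
so that $0\le F_\infty(x)-F_n(x)\le F_\infty(x)-F_\infty\!\left(x-\tfrac{C_0}{n}\right)$.

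The final step would use that $Z_\infty=\max\{U,1-U\}$ is $\Unif[\tfrac12,1]$, so its distribution function $F_\infty$ has density bounded by $2$ and is therefore globally Lipschitz with constant $2$. Hence $F_\infty(x)-F_\infty(x-C_0/n)\le 2C_0/n$ for every $x$, and taking the supremum over $x$ gives $\sup_{x\in\R}|F_n(x)-F_\infty(x)|\le 2C_0/n$, which is the claim with $C=2C_0=2(e-1)\log 2$.

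I do not expect a genuine obstacle here; the one thing to get right is the recognition that the corollary should be attacked through the pointwise inequality rather than through the moment asymptotics, together with the observation that the limiting law being uniform makes $F_\infty$ Lipschitz, which is exactly what converts a pointwise $O(1/n)$ gap into a Kolmogorov-distance $O(1/n)$ bound. The only minor care needed is at the edges $x<\tfrac12$ and $x\ge 1$, where $F_n$ and $F_\infty$ are both $0$ or both $1$; there the bound is trivial, and the Lipschitz estimate already covers these cases since $F_\infty$ is constant outside $[\tfrac12,1]$.
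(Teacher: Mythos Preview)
Your proof is correct and takes a genuinely different route from the paper's. The paper proves the corollary via the moment asymptotics of Theorem~\ref{TheoProb1}: from $\E(Z_n^s)=\E(Z_\infty^s)+\mathcal{O}(s\zeta(2)/(2^{s+2}n^2))$ it bounds $|\phi_n(t)-\phi_\infty(t)|/|t|$ and then applies the Berry--Esseen smoothing inequality with cutoff $T=n$, using that $F_\infty$ has bounded density to control the second term. You instead exploit the deterministic sandwich $Z_\infty\le Z_n\le 2^{1/n}Z_\infty$ coming from the elementary norm comparison on $\R^2$, turn it directly into $F_\infty(x-C_0/n)\le F_n(x)\le F_\infty(x)$, and finish with the Lipschitz bound on $F_\infty$. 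Your argument is shorter, entirely self-contained (it needs neither Theorem~\ref{TheoProb1} nor the smoothing lemma), and delivers an explicit constant $C=2(e-1)\log 2$. The paper's approach, by contrast, shows how the refined moment expansion feeds into distributional rates and would transfer to situations where no pointwise coupling like $Z_\infty\le Z_n$ is available.
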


We also can directly strengthen to almost-sure convergence. 

\begin{coroll}
\label{CorollProb2}
The random variable $Z_n=\big(U^n+(1-U)^n\big)^{\frac1n}$ converges almost surely to $Z_{\infty}=\max\{U,1-U\}$.
\end{coroll}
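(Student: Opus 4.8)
The plan is to recognize that the almost-sure statement reduces to a deterministic, pointwise fact about $\ell^n$-norms, since the single random variable $U=\Unif[0,1]$ enters every $Z_n$. For a fixed sample point $\omega$, the value $U(\omega)=u\in[0,1]$ is a constant, so $Z_n(\omega)=(u^n+(1-u)^n)^{1/n}$ is merely a deterministic sequence in $n$, and it suffices to show this sequence converges to $\max\{u,1-u\}$ for every $u\in[0,1]$.

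First I would set $M=\max\{u,1-u\}$ and observe that $M\ge\frac12>0$ for all $u\in[0,1]$, since at least one of $u$, $1-u$ is at least $\frac12$; this ensures the manipulations below never involve a vanishing quantity. Because $u,1-u\ge 0$, one of the two summands equals $M^n$ and each is at most $M^n$, giving $M^n\le u^n+(1-u)^n\le 2M^n$. Taking $n$th roots (all quantities positive) yields the elementary squeeze
\[
M\le Z_n(\omega)=\big(u^n+(1-u)^n\big)^{1/n}\le 2^{1/n}M.
\]
Letting $n\to\infty$ and using $2^{1/n}\to 1$ gives $Z_n(\omega)\to M=\max\{u,1-u\}=Z_\infty(\omega)$. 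As this holds for \emph{every} $\omega$, the convergence is in fact sure, hence in particular almost sure; measurability of the limit is automatic since $Z_\infty=\max\{U,1-U\}$ is a continuous function of $U$.

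The one thing to be careful about is the temptation to route the argument through the earlier distributional results. I would stress that Corollary~\ref{CorollProb1} gives convergence in distribution with a rate, but convergence of the distribution functions $F_n$ to $F_\infty$ does not by itself upgrade to almost-sure convergence of the $Z_n$, as those statements concern laws rather than sample paths. The point of the present corollary is precisely that no probabilistic machinery is required: because all the $Z_n$ are built from a common $U$, the strongest mode of convergence follows at once from the deterministic norm inequality above. Thus there is no genuine obstacle here, only the observation that the correct framing is pointwise rather than distributional.
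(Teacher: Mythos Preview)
Your argument is correct: the squeeze $M\le (u^n+(1-u)^n)^{1/n}\le 2^{1/n}M$ with $M=\max\{u,1-u\}$ gives pointwise (indeed sure) convergence, and nothing more is needed.

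The paper takes a genuinely different route. Rather than exploiting the deterministic norm inequality, it uses the moment expansions developed earlier to bound $\E((Z_n-Z_\infty)^2)=\E(Z_n^2)+\E(Z_\infty^2)-2\E(Z_nZ_\infty)=\mathcal{O}(n^{-2})$, then applies Markov's inequality to get $\mathbb{P}\{|Z_n-Z_\infty|>1/\ell\}\le C\ell^2/n^2$, and concludes via Borel--Cantelli. Your approach is strictly simpler and yields a stronger conclusion (sure rather than almost-sure convergence, with the explicit rate $Z_n-Z_\infty\le(2^{1/n}-1)Z_\infty\le (\log 2)/n$). The paper's approach, by contrast, illustrates how the asymptotic moment computations can be leveraged probabilistically and would transfer to settings where a direct pointwise argument is unavailable; but for this particular corollary your observation that all $Z_n$ are functions of the same $U$ makes the problem purely deterministic, and the Borel--Cantelli machinery is overkill.
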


\begin{remark}
We obtain in a similar way moment convergence of random variables
\[
Z_n=\big(B^n+(1-B)^n\big)^{\frac1n},
\]
with $B$ denoting a $Beta(\alpha,\beta)$ distributed random variable with real $\alpha,\beta>0$,
generalizing our results above (case $\alpha=\beta=1$). 
\end{remark}

We note that
\[
\mathbb{E}(Z_n^s)=\int_{\Omega}\Big(\big(U^n+(1-U)^n\big)^{\frac1n}\Big)^s d\mathbb{P}=\int_{0}^{1}\Big(x^n+(1-x)^n\Big)^{\frac{s}n}dx.
\]
Proceeding as before we use the symmetry of the integrand. 
\[
\mathbb{E}(Z_n^s)=2\int_0^{\frac12}(1-x)^s
\left[1+\left(\frac{x}{1-x}\right)^n\right]^{\frac{s}{n}}dx.
\]
Substituting again $u=\frac{x}{1-x}$, or $x=\frac{u}{1+u}$, leads to
\[
\mathbb{E}(Z_n^s)=2\int_0^1\left(1-\frac{u}{1+u}\right)^s(1+u^n)^{\frac{s}{n}}\frac{du}{(1+u)^2}=
2\int_0^1(1+u^n)^{\frac{s}{n}}\frac{du}{(1+u)^3} .
\]
Writing $(1+u^n)^{\frac{s}{n}}$ as $\exp\left(\frac{s}{n}\log(1+u^n)\right)$
and expanding the exponential in series, we have
\[
\mathbb{E}(Z_n^s)=2\int_0^1\left(1+\sum_{k=1}^\infty\frac1{k!}\left(\frac{s}{n}\log(1+u^n)\right)^k
\right)\frac{du}{(1+u)^{s+2}}.
\]
As before,
\begin{multline*}
\mathbb{E}(Z_n^s)=2\int_0^1\frac{du}{(1+u)^{s+2}}
+2\sum_{k=1}^\infty\int_0^1k!\sum_{m=1}^\infty
\frac{u^{mn}s(m,k) s^k}{m!n^k k!}\frac{du}{(1+u)^{s+2}}\\
=\frac{2(1-\frac{1}{2^{s+1}})}{s+1}+\sum_{k=1}^\infty \frac{s^k}{n^k}
\sum_{m=1}^\infty(-1)^{m-k}
\frac{\zt_{m-1}(\{1\}_{k-1})}{m}\int_0^1\frac{2u^{mn}}{(1+u)^{s+2}}du.
\end{multline*}
It remains to expand the integral into powers of $n$. 
Make the substitution $u=e^{-t}$ and then integrate by parts:
\begin{multline*}
\int_0^1\frac{2u^{mn}}{(1+u)^{s+2}}du
=\int_0^\infty \frac{2e^{-t}}{(1+e^{-t})^{s+2}}e^{-nmt}dt=\\
-\frac{1}{2^s(s+1)} + \frac{nm}{s+1}\int_0^\infty \frac{2}{(1+e^{-t})^{s+1}}e^{-mnt}dt.
\end{multline*}

We adapt the previous result for $s=1$ using derivative polynomials.
Changing the sign of the variable $t$ in~\eqref{EulerGF} and evaluation at $x=0$ gives
\[
\mathcal{E}(-t,0)=\frac{2}{1+e^{-t}}=\sum_{j\ge 0}(-1)^j E_j(0)\frac{t^j}{j!}.
\]
Thus, for our base function we choose the logistic function 
\[
f(t)=\frac12\mathcal{E}(-t,0) =\frac1{1+e^{-t}}.
\]
\begin{lem}[Derivative polynomials - logistic function]
For positive integer $r$ the derivative $f_r(z):=\frac{d^{r-1}}{dt^{r-1}}f(t)$ 
can be written as a polynomial in $f$:
\[
f_r(z)=\sum_{j=1}^{r}c_{r,j}\cdot f(t)^j=\sum_{j=1}^{r}\frac{c_{r,j}}{(1+e^{-t})^j}.
\]
The numbers $c_{r,j}$ are explicitly given by
\[
(-1)^{j-1}(j-1)!\stirtwo{r}{j} ,
\]
where $\stirtwo{n}{k}$ is the number of ways to partition $\{1,2,\dots,n\}$
into $k$ nonempty subsets (Stirling number of the second kind).
In particular, $c_{r,1}=1$ and $c_{r,r}=(r-1)!(-1)^{r-1}$.
\end{lem}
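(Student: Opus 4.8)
The plan is to exploit the fact that the logistic function satisfies an autonomous first-order ODE, so that every derivative is automatically a polynomial in $f$ itself, and then to pin down the coefficients $c_{r,j}$ by a recurrence. First I would record the defining differential equation. Differentiating $f(t)=\frac{1}{1+e^{-t}}$ directly gives
\[
f'(t)=\frac{e^{-t}}{(1+e^{-t})^2}=\frac{1}{1+e^{-t}}-\frac{1}{(1+e^{-t})^2}=f-f^2.
\]
This single identity $f'=f-f^2$ is the engine of the whole proof: it shows that $\frac{d}{dt}$ sends any polynomial in $f$ to another polynomial in $f$, since for a monomial we have $\frac{d}{dt}f^j=j f^{j-1}f'=j(f^j-f^{j+1})$.

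Second, I would prove the polynomial representation together with a recurrence for the $c_{r,j}$ by induction on $r$. The base case $r=1$ is just $f_1=f$, i.e. $c_{1,1}=1$. For the inductive step, assuming $f_r=\sum_{j=1}^r c_{r,j}f^j$, I apply $\frac{d}{dt}$ and use the monomial rule above to obtain
\[
f_{r+1}=\sum_{j=1}^r j c_{r,j}(f^j-f^{j+1})=\sum_{j=1}^{r+1}\bigl(j\,c_{r,j}-(j-1)\,c_{r,j-1}\bigr)f^j,
\]
where I adopt the boundary conventions $c_{r,0}=c_{r,r+1}=0$. Reading off the coefficient of $f^j$ yields the two-term recurrence $c_{r+1,j}=j\,c_{r,j}-(j-1)\,c_{r,j-1}$, which together with $c_{1,1}=1$ determines all the $c_{r,j}$.

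Finally, I would verify that the closed form $c_{r,j}=(-1)^{j-1}(j-1)!\stirtwo{r}{j}$ solves this recurrence. Substituting it into $j\,c_{r,j}-(j-1)\,c_{r,j-1}$ and factoring out $(-1)^{j-1}(j-1)!$ (the sign and factorial working out after the shift $j-1\mapsto j$), the right-hand side becomes $(-1)^{j-1}(j-1)!\bigl(j\stirtwo{r}{j}+\stirtwo{r}{j-1}\bigr)$, which equals $(-1)^{j-1}(j-1)!\stirtwo{r+1}{j}=c_{r+1,j}$ by the standard recurrence $\stirtwo{r+1}{j}=j\stirtwo{r}{j}+\stirtwo{r}{j-1}$ for Stirling numbers of the second kind. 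The formula also reproduces the base case and the advertised boundary values, since $\stirtwo{r}{1}=\stirtwo{r}{r}=1$ give $c_{r,1}=1$ and $c_{r,r}=(-1)^{r-1}(r-1)!$, so the induction closes.

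The main obstacle is purely bookkeeping rather than analytic: keeping the index shift $f^j\mapsto f^{j+1}$ aligned with the boundary conventions $c_{r,0}=c_{r,r+1}=0$, and tracking the sign and factorial carefully so that the derived recurrence $c_{r+1,j}=j\,c_{r,j}-(j-1)\,c_{r,j-1}$ matches the Stirling recurrence exactly. Once the logistic identity $f'=f-f^2$ is in hand there is no further difficulty, and the entire argument is an elementary induction.
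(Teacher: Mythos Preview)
Your proof is correct and complete, but it proceeds along a genuinely different route from the paper. The paper appeals to the general machinery of derivative polynomials: from $f'=P(f)$ one has $f^{(n)}=P_n(f)$, and \cite[Theorem~1]{H95} gives the bivariate generating function $\sum_{n\ge 0}\frac{t^n}{n!}P_n(x)=f(f^{-1}(x)+t)$. For the logistic function this evaluates to $xe^t/(1+x(e^t-1))$; expanding as a geometric series in $x(e^t-1)$, invoking $(e^t-1)^m=m!\sum_{p\ge m}\stirtwo{p}{m}t^p/p!$, and applying the identity $\sum_{p=m}^n\binom{n}{p}\stirtwo{p}{m}=\stirtwo{n+1}{m+1}$ then yields the coefficients. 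Your argument instead stays at the level of the single ODE $f'=f-f^2$, derives the two-term recurrence $c_{r+1,j}=j\,c_{r,j}-(j-1)\,c_{r,j-1}$ by straightforward differentiation, and matches it against the Stirling recurrence $\stirtwo{r+1}{j}=j\stirtwo{r}{j}+\stirtwo{r}{j-1}$. This is more elementary and self-contained (no generating functions, no auxiliary binomial--Stirling identity), whereas the paper's approach situates the result within a broader framework and would generalize more readily to other functions satisfying $f'=P(f)$.
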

\begin{proof}
In \cite{H95} a general theory of derivative polynomials is developed:  if
$f$ is a function such that $f'(t)=P(f(t))$ for a polynomial function $P$,
then evidently $f^{(n)}(t)=P_n(f(t))$ for polynomials $P_n$, and if we let
\[
F(x,t)=\sum_{n\ge 0}\frac{t^n}{n!}P_n(x)
\]
then \cite[Theorem 1]{H95} gives
\begin{equation}
\label{gfform}
F(x,t)=f(f^{-1}(x)+t) .
\end{equation}
In the case $f(t)=(1+e^{-t})^{-1}$, Eq. (\ref{gfform}) gives
\[
\sum_{n\ge 0}\frac{t^n}{n!}P_n(x)=\frac{x}{x+(1-x)e^{-t}}=\frac{xe^t}{1+x(e^t-1)}
=xe^t\sum_{m=0}^\infty (-1)^mx^m(e^t-1)^m .
\]
Using the identity
\[
(e^t-1)^m=m!\sum_{p\ge m}\stirtwo{p}{m}\frac{t^p}{p!},
\]
this becomes
\begin{multline*}
\sum_{n\ge 0}\frac{t^n}{n!}P_n(x)=xe^t\sum_{m=0}^\infty(-1)^mx^mm!\sum_{p\ge m}
\stirtwo{p}{m}\frac{t^p}{p!}=\\
\sum_{q=0}^\infty\frac{t^q}{q!}\sum_{p=0}^\infty\frac{t^p}{p!}\sum_{m=0}^p
(-1)^mx^{m+1}m!\stirtwo{p}{m}.
\end{multline*}
Extract the coefficient of $t^n/n!$ on both sides to get
\begin{multline*}
P_n(x)=\sum_{p=0}^n\binom{n}{p}(-1)^mx^{m+1}m!\stirtwo{p}{m}=
\sum_{m=0}^n(-1)^m x^{m+1}m!\sum_{p=m}^n \binom{n}{p}\stirtwo{p}{m}\\
=\sum_{m=0}^n(-1)^mx^{m+1}m!\stirtwo{n+1}{m+1},
\end{multline*}
where we used the identity \cite[Eq. (6.15)]{GKP} in the last step.
The conclusion then follows.
\end{proof}

Henceforth $c_{r,j}$ denotes the coefficients of the derivative polynomials 
discussed above.
\begin{lem}
\label{LemmaTriangular}
Define $\gamma_{s+1,r}$ as the solutions of the triangular linear system 
of equations
\[
\left(
\begin{matrix}
c_{1,1}&c_{2,1}&\dots&c_{s+1,1}\\
0&c_{2,2}&\dots&c_{s+1,2}\\
\vdots&&\ddots&\vdots\\
0&0&\dots&c_{s+1,s+1}
\end{matrix}
\right)
\cdot
\left(
\begin{matrix}
\gamma_{s+1,1}\\
\gamma_{s+1,2}\\
\vdots\\
\gamma_{s+1,s+1}
\end{matrix}
\right)
=
\left(
\begin{matrix}
0\\
0\\
\vdots\\
1\\
\end{matrix}
\right).
\]
Then, $\gamma_{s+1,r}$ is given by 
\[
\gamma_{s+1,r}= \frac{(-1)^{r-1}\stirone{s+1}{r}}{s!}=(-1)^{r-1}\zt_s(\{1\}_{r-1}),
\]
where $\stirone{s+1}{r}$ denote the signless Stirling numbers of the first kind. Furthermore,
\[
\frac{2}{(1+e^{-t})^{s+1}}= \sum_{k\ge 0} \frac{t^k}{k!}\sum_{j=1}^{s+1}(-1)^{k+j-1}\gamma_{s+1,j} E_{k+j-1}(0).
\]
\end{lem}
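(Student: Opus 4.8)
The plan is to read the triangular system as the inversion of the derivative--polynomial relation of the preceding lemma. Writing $f_r=f^{(r-1)}=\sum_{i=1}^r c_{r,i}f^i$, the $i$-th row of the displayed matrix equation is $\sum_{j}c_{j,i}\gamma_{s+1,j}=\delta_{i,s+1}$, which is precisely the statement that the combination $\sum_{j=1}^{s+1}\gamma_{s+1,j}f_j$, re-expanded in powers of $f$, collapses to the single monomial $f^{s+1}$. Hence the row $(\gamma_{s+1,j})_j$ is nothing but the $(s+1)$-st row of the inverse of the triangular array $(c_{r,i})$, and the first claim is a closed form for that inverse.

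First I would substitute $c_{r,i}=(-1)^{i-1}(i-1)!\stirtwo{r}{i}$ from the preceding lemma together with the proposed value $\gamma_{s+1,j}=\frac{(-1)^{j-1}}{s!}\stirone{s+1}{j}$, and check directly that $\sum_j c_{j,i}\gamma_{s+1,j}=\delta_{i,s+1}$. Pulling the $i$-dependent factor $(-1)^{i-1}(i-1)!/s!$ out front leaves the sum $\sum_j(-1)^{j-1}\stirone{s+1}{j}\stirtwo{j}{i}$; rewriting $(-1)^{j-1}=(-1)^s(-1)^{s+1-j}$ turns this into $(-1)^s$ times the classical orthogonality relation $\sum_j(-1)^{s+1-j}\stirone{s+1}{j}\stirtwo{j}{i}=\delta_{s+1,i}$ for Stirling numbers of the first and second kinds. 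Only $i=s+1$ survives, and there the accumulated factors give $\frac{(-1)^s s!}{s!}(-1)^s=1$, as required; uniqueness is free because the array is triangular with nonzero diagonal $c_{j,j}=(-1)^{j-1}(j-1)!$. The identity $\frac{(-1)^{j-1}}{s!}\stirone{s+1}{j}=(-1)^{j-1}\zt_s(\{1\}_{j-1})$ is then immediate from Lemma \ref{stirmzv} with $m=s+1$, $k=j$, which yields $\stirone{s+1}{j}=s!\,\zt_s(\{1\}_{j-1})$.

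For the generating-function identity I would insert the relation $\frac{2}{(1+e^{-t})^{s+1}}=2\sum_{j=1}^{s+1}\gamma_{s+1,j}f^{(j-1)}(t)$, established above, into the expansion $2f(t)=\mathcal{E}(-t,0)=\sum_{i\ge 0}(-1)^iE_i(0)\frac{t^i}{i!}$ recorded just before the preceding lemma. Differentiating termwise $j-1$ times and reindexing by $k=i-(j-1)$ gives $2f^{(j-1)}(t)=\sum_{k\ge 0}(-1)^{k+j-1}E_{k+j-1}(0)\frac{t^k}{k!}$; summing this against $\gamma_{s+1,j}$ and exchanging the order of summation produces exactly the claimed double sum.

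I expect the single delicate point to be the sign-and-factorial bookkeeping in the orthogonality step: one must track that the normalization $1/s!$, the prefactor $(i-1)!$, and the two sign factors $(-1)^{i-1}$ and $(-1)^s$ combine to give $1$ exactly at $i=s+1$ and nowhere else. Everything else is a routine consequence of the preceding lemma, the Stirling orthogonality relation, and termwise differentiation of a convergent exponential generating function.
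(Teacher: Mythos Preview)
Your proposal is correct and follows essentially the same route as the paper: both identify the linear system with Stirling--number inversion (the paper simply asserts ``by the inversion relationships between Stirling numbers'' where you spell out the orthogonality $\sum_j(-1)^{s+1-j}\stirone{s+1}{j}\stirtwo{j}{i}=\delta_{s+1,i}$ and check the constants), invoke Lemma~\ref{stirmzv} for the $\zt_s$ form, and then derive the generating-function identity by writing $f^{s+1}=\sum_j\gamma_{s+1,j}f^{(j-1)}$ and differentiating the series $2f(t)=\sum_i(-1)^iE_i(0)t^i/i!$ termwise.
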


\begin{proof}
The system of linear equations can be expressed as 
\[
\sum_{r=j}^{s+1}c_{r,j}\gamma_{s+1,r}=\delta_{j,s+1},\quad 1\le j\le s+1,
\]
where $\delta_{j,s+1}$ denotes the Kronecker delta. More explicitly, 
\[
(-1)^{j-1}(j-1)!\sum_{r=j}^{s+1}\stirtwo{r}{j}\gamma_{s+1,r}=\delta_{j,s+1}.
\]
By the inversion relationships between Stirling numbers we directly observe 
that 
\[
\gamma_{s+1,r}=\frac{(-1)^{r-1}\stirone{s+1}{r}}{s!}.
\]
By Lemma~\ref{stirmzv} we obtain the second expression.
\end{proof}

\begin{remark}
The generalized Euler polynomials $E_n^{(r)}(x)$, $r\in\N$, are defined by the 
generating function
\[
\mathcal{E}_r(t,x)=\left(\frac{2}{1+e^t}\right)^r e^{xt}=\sum_{k\ge 0}E_k^{(r)}(x)\frac{t^k}{k!},
\]
see~\cite{OLBC}. The result above implies the formula
\[
E_k^{(r)}(0)=2^{r-1}\sum_{j=1}^{r}(-1)^{j-1}\gamma_{r,j} E_{k+j-1}(0),
\]
also leading to a new formula for $E_k^{(r)}(x)$.  Cf. 
\[
E_k^{(r)}(0)=\frac{2^{r-1}}{(r-1)!}\sum_{j=0}^r s(r,j)(-1)^{r+j}E_{k+j-1}(0)
\]
which follows from \cite{MV} and gives an alternative derivation of the $\gamma_{r,j}$.
\end{remark}

\begin{proof}
By our previous result
\[
\frac{1}{(1+e^{-t})^{s+1}} = \sum_{j=1}^{s+1}\gamma_{s+1,j} \frac{d^{j-1}}{dt^{j-1}}f(t),
\]
where $f(t)=\frac12\mathcal{E}(-t,0)=\frac1{1+e^{-t}}$. 
Then
\begin{align*}
\frac{2}{(1+e^{-t})^{s+1}} &= \sum_{j=1}^{s+1}\gamma_{s+1,j}\frac{d^{j-1}}{dt^{j-1}}2f(t)\\
&=\sum_{j=1}^{s+1}\gamma_{s+1,j}\frac{d^{j-1}}{dt^{j-1}} \sum_{k\ge 0}(-1)^k E_k(0)\frac{t^k}{k!}\\
&=\sum_{j=1}^{s+1}\gamma_{s+1,j} \sum_{k\ge 0}(-1)^{k+j-1} E_{k+j-1}(0)\frac{t^k}{k!}.
\end{align*}
\end{proof}
Lemma \ref{LemmaTriangular} implies that
\[
\int_0^\infty \frac2{(1+e^{-t})^{s+1}}e^{-nmt}dt
=\sum_{j=1}^{s+1}\gamma_{s+1,j}\sum_{k\ge 0}(-1)^{k+j-1} E_{k+j-1}(0)\frac{1}{m^{k+1}n^{k+1}}.
\]
Furthermore
\begin{align*}
\mathbb{E}(Z_n^s)&=\frac{2(1-\frac{1}{2^{s+1}})}{s+1}
+\sum_{k=1}^\infty \frac{s^k}{n^k}\sum_{m=1}^\infty(-1)^{m-k}
\frac{\zt_{m-1}(\{1\}_{k-1})}{m}\\
&\quad\times\bigg(-\frac{1}{2^s(s+1)} + \frac{1}{s+1}\sum_{j=1}^{s+1}\gamma_{s+1,j}\sum_{\ell\ge 0}(-1)^{\ell+j-1} E_{\ell+j-1}(0)\frac{1}{m^{\ell}n^{\ell}}\bigg).
\end{align*}
Setting $t=0$ in Lemma \ref{LemmaTriangular} we get
\[
\frac{1}{2^s}= \sum_{j=1}^{s+1}(-1)^{j-1}\gamma_{s+1,j} E_{j-1}(0).
\]
Consequently, the first summand cancels and we get
\begin{align*}
\mathbb{E}(Z_n^s)&=\frac{2(1-\frac{1}{2^{s+1}})}{s+1}+\\
\sum_{k=1}^\infty \frac{s^k}{n^k}&\sum_{m=1}^\infty(-1)^{m-k}
\frac{\zt_{m-1}(\{1\}_{k-1})}{m(s+1)}
\sum_{j=1}^{s+1}\gamma_{s+1,j}\sum_{\ell\ge 1}(-1)^{\ell+j-1} 
E_{\ell+j-1}(0)\frac{1}{m^{\ell}n^{\ell}}\\
&=\frac{2(1-\frac{1}{2^{s+1}})}{s+1}+\\
\sum_{p=2}^\infty&\frac{(-1)^{p}}{n^p}\sum_{k=1}^{p-1}\frac{s^{k}}{s+1}\sum_{j=1}^{s+1}\gamma_{s+1,j}(-1)^{j-1} E_{p-k+j-1}(0)\zt(\overline{p-k+1},\{1\}_{k-1})
\end{align*}
by changing the order of summation.

\smallskip

Concerning arbitrary positive real $s>0$ we have to proceed in a slightly different way.  
Let $B_{n,k}(x_1,\dots,x_{n-k+1})$ denote the $k$th Bell polynomial defined by
\begin{equation}
\label{BellPolynomials}
B_{n,k}(x_1,\dots,x_{n-k+1})=\sum_{\substack{\sum_{\ell=1}^{n-k+1}j_\ell =k\\ \sum_{\ell=1}^{n-k+1}\ell j_\ell =n}}\frac{n!}{j_1!\cdots j_{n-k+1}!}\left(\frac{x_1}{1!}\right)^{j_1}\dots \left(\frac{x_{n-k+1}}{(n-k+1)!}\right)^{j_{n-k+1}}.
\end{equation}
We have
\begin{multline*}
\frac{2}{(1+e^{-t})^{s+1}}=\big(\mathcal{E}(-t,0)\big)^{s+1}
=\big(1+(\mathcal{E}(-t,0)-1))^{s+1}=\\
\sum_{j\ge 0}\frac{\fallfak{(s+1)}{j}}{j!}(\mathcal{E}(-t,0)-1)^j
=\sum_{j \ge 0}\frac{\sum_{\ell=1}^{j}\fallfak{(s+1)}{\ell}B_{j,\ell}(E_1(0),\dots,E_{j-\ell+1}(0))}{j!}(-1)^{j}t^{j}.
\end{multline*}
Consequently, 
\[
\int_0^\infty \frac{2}{(1+e^{-t})^{s+1}}e^{-mnt}dt
=\sum_{j \ge 0}(-1)^{j}\frac{\sum_{\ell=1}^{j}\fallfak{(s+1)}{\ell}B_{j,\ell}(E_1(0),\dots,E_{j-\ell+1}(0))}{(mn)^{j+1}}.
\]
Finally, 
\begin{align*}
\mathbb{E}(Z_n^s)&=\frac{2(1-\frac{1}{2^{s+1}})}{s+1}+\sum_{k=1}^\infty \frac{s^k}{n^k}\sum_{m=1}(-1)^{m-k}
\frac{\zt_{m-1}(\{1\}_{k-1})}{m}\\
&\quad\times \frac{1}{s+1}\sum_{j \ge 1}(-1)^{j}\frac{\sum_{\ell=1}^{j}\fallfak{(s+1)}{\ell}B_{j,k}(E_1(0),\dots,E_{j-\ell+1}(0))}{(mn)^{j}}\\
&=\frac{2(1-\frac{1}{2^{s+1}})}{s+1}+\sum_{p=2}^\infty \frac{(-1)^p}{n^p}\sum_{k=1}^{p-1}\frac{s^k}{s+1}\zt(\overline{p+1-k},\{1\}_{k-1})\\
&\quad\times \sum_{\ell=1}^{p-k}\fallfak{(s+1)}{\ell}B_{p-k,\ell}(E_1(0),\dots,E_{p-k-\ell+1}(0)).
\end{align*}

\smallskip

\begin{proof}[Proof of Corollary~\ref{CorollProb1}]
We use the general version of the Berry-Esseen inequality~\cite{FS2009}:
\[
\sup_{x\in \R}| F(x)-G(x)| \le c_1\int_{-T}^{T}\left|\frac{\phi_F(t)-\phi_G(t)}{t}\right|dt
+c_2\sup_{x\in\R}\big(G(x+\frac1T)-G(x)\big).
\]
From our moment expansion
\[
\E(Z_n^s)=\E(Z_\infty^s)+\mathcal{O}\Big(\frac{s\zeta(2)}{2^{s+2} n^2}\Big),
\]
we obtain for the characteristic functions $\phi_n(t)=\E(e^{i t Z_n})$ and $\phi_\infty(t)=\E(e^{i t Z_\infty})$
\[
\frac{|\phi_n(t)-\phi_\infty(t)|}{|t|}\le \frac{C_1}{n^2}.
\]
Choosing $T=n$ this gives a $\frac1n$ bound for the integral. We get $\sup_{x\in\R}\big(G(x+\frac1T)-G(x))\big)\le \frac{C_2}n$ leading to the stated result.
\end{proof}

\begin{proof}[Proof of Corollary~\ref{CorollProb2}]
By the Markov inequality we have
\[
\mathbb{P}\{|Z_n-Z_\infty|>\frac1\ell \}\le \ell^2\E((Z_n-Z_\infty)^2)=
\ell^2\big(\E(Z_n^2)+\E(Z_\infty^2)-2\E(Z_n Z_\infty)\big).
\]
The random variables $Z_n$ and $Z_\infty$ are defined in terms of the same uniform distribution and we readily obtain the expansion of 
\[
\E(Z_n Z_\infty)=\int_0^{1}(x^n+(1-x)^n)^{\frac1n}\cdot\max\{x,1-x\}dx=\frac23\cdot\frac78 +\mathcal{O}(\frac1{n^2})
\]
leading to $\mathbb{P}\{|Z_n-Z_\infty|>\frac1\ell \}\le C\cdot\frac{\ell^2}{n^2}$.
Let 
\[
E_{n,\ell}=\Big\{\omega\in \Omega:\ \big|Z_n-Z_\infty\big|>\frac{1}{\ell} \Big\},\quad n\in\N,\quad  \ell>0.
\]
We have
\[
\sum_{n\ge 1}\mathbb{P}\{E_{n,\ell}\} \le \sum_{n\ge 1} \frac{C\ell^2}{n^2} <\infty.
\]
Let $E_{\ell}=\limsup E_{n,\ell}$. By the Borel-Cantelli Lemma we have $\mathbb{P}(E_{\ell})=0$ for any $\ell>0$, giving the stated result.
\end{proof}

%

\subsection{Independent uniformly distributed random variables}
Let $U_j$ denote mutually independent standard uniformly distributed random variables, $1\le j\le r$ with $r\ge 2$, .
Further, let $\mathbf{U}$ denote the random vector
\[
\mathbf{U}=(U_1,\dots,U_r).
\]
Let $Z_n$ be defined as
\[
Z_n=\Vert \mathbf{U}\Vert_n=(U_1^n+U_2^n+\dots+U_r^n)^{\frac1n}
\]
A folklore result states that any order statistic for uniform distributions is Beta-distributed. 
In particular,
\[
Z_\infty= \Vert \mathbf{U}\Vert_{\infty}= B(r,1).
\]
We are interested in the asymptotics of $Z_n$ as $n\to\infty$ and derive asymptotics of the moments
\[
I_s=\E(Z_n^s)=\int_{[0,1]^r}(x_1^n+ \dots +x_r^n)^{\frac{s}n}d(x_1,\dots,x_r).
\]
The special case $r=2$, $Z_n=(U_1^{n}+U_2^n)^{\frac1n}$ is the direct counterpart of our earlier results for $(U^n+(1-U)^n)^{\frac1n}$.
Our asymptotic series involves for $r\ge 2$ multiple zeta values. Interestingly, for $r\ge 3$ 
variants of multiple zeta values and Euler sums appear. 
Let $\zts_r(i_1,\dots,i_k)$ denote the truncated multiple zeta star value
\begin{equation}
\label{DefStarZeta}
\zts_r(i_1,\dots,i_k)=\sum_{r\ge n_1\ge n_2\ge \dots\ge n_k\ge 1}
\frac1{n_1^{i_1}n_2^{i_2}\cdots n_k^{i_k}},
\end{equation}
and $\zts_r(i_1,\dots,i_k;x_1,\dots,x_k)$ denote the truncated weighted multiple 
zeta star value
\begin{equation}
\label{DefStarZetaWeight}
\zts_r(i_1,\dots,i_k;x_1,\dots,x_k)=\sum_{r\ge n_1\ge n_2\ge \dots\ge n_k\ge 1}
\frac{x_1^{n_1}\dots x_k^{n_k}}{n_1^{i_1}n_2^{i_2}\cdots n_k^{i_k}} .
\end{equation}
Then $\zts_r(i_1,\dots,i_k;\{1\}_k)$ is the ordinary zeta value 
$\zts_r(i_1,\dots,i_k)$, and
\[
\zts_r(\{1\}_k;\{1\}_{k-1},2)
=\sum_{r\ge n_1\ge n_2\ge \dots\ge n_k\ge 1}
\frac{2^{n_k}}{n_1n_2\cdots n_k}
=\sum_{n_1=1}^{r}\frac1{n_1}\sum_{n_2=1}^{n_1}\frac1{n_2}\dots\sum_{n_{k}=1}^{n_{k-1}}\frac{2^{n_k}}{n_k}.
\]

\begin{thm}
The random variable $Z_n=\Vert \mathbf{U}\Vert_n$ converges to $Z_\infty=B(r,1)$ with convergence 
of all positive integer moments. 
\[
\E(Z_n^s)=\frac{r}{r-1+s}\Big(1-\frac{s(r-1)}{n^2}\zt(\bar{2})+\mathcal{O}(\frac1{n^3})\Big).
\]
In particular, for $r=2$ and $Z_n=(U_1^n+U_2^n)^{\frac1n}$ we have the exact representation
\[
\E(Z_n^s)=\frac{2}{1+s}\Big(1+\sum_{p\ge 2}\frac{(-1)^p}{n^p}\sum_{\ell=0}^{p-2}s^{p-\ell-1}(-\zt(\overline{\ell+2},\{1\}_{p-\ell-2})) \Big).
\]
For $r=3$ we have the exact representation
{\footnotesize
\begin{align*}
&\E(Z_n^s)=\frac{3}{2+s}\Big(1+\sum_{p\ge 2}\frac{2(-1)^p}{n^p}\sum_{\ell=0}^{p-2}s^{p-\ell-1}(-\zt(\overline{\ell+2},\{1\}_{p-\ell-2})) \Big)+\frac{3}{2+s}\sum_{k=1}^{\infty}\frac{s^k}{n^k}\sum_{\ell_1,\ell_2\ge 0}\frac{(-1)^{\ell_1+\ell_2}}{n^{\ell_1+\ell_2+2}} \\
&\times\bigg[\sum_{i=1}^{\ell_1+1}\binom{i+\ell_2-1}{\ell_2}\sum_{m=1}^{\infty}\frac{(-1)^{m-k}\zt_{m-1}(\{1\}_{k-1})\Big(\zts_{m}(\{1\}_{\ell_1+2-i};\{1\}_{\ell_1+1-i},2)-\zts_{m}(\{1\}_{\ell_1+2-i})-\frac{1}{m^{\ell_1+2-i}}\Big)}{m^{1+i+\ell_2}} \\
&\quad +\sum_{i=1}^{\ell_2+1}\binom{i+\ell_1-1}{\ell_1}\sum_{m=1}^{\infty}\frac{(-1)^{m-k}\zt_{m-1}(\{1\}_{k-1})\Big(\zts_{m}(\{1\}_{\ell_2+2-i};\{1\}_{\ell_2+1-i},2)-\zts_{m}(\{1\}_{\ell_2+2-i})-\frac{1}{m^{\ell_2+2-i}}\Big)}{m^{1+i+\ell_1}}
\bigg].
\end{align*}
}
\end{thm}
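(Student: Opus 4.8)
The plan is to collapse the $r$-fold integral onto its one-lower-dimensional ``leading face'' by symmetry, and then to expand the resulting integral in powers of $1/n$ with the same machinery used for Theorem~\ref{main}: the Stirling-number expansion \eqref{logp} of $\log^k$, Lemma~\ref{stirmzv}, and termwise integration. The essential new phenomenon for $r\ge3$ is that the logarithm now has a genuinely multivariate argument, and untangling the resulting convolution is what manufactures the (weighted) star values.

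First I would dispose of the convergence claim. Since $\max_j U_j^n\le\sum_{j=1}^r U_j^n\le r\max_j U_j^n$, taking $n$th roots yields
\[
\max_{1\le j\le r}U_j\ \le\ Z_n\ \le\ r^{1/n}\max_{1\le j\le r}U_j,
\]
so $Z_n\to Z_\infty=\max_j U_j$ almost surely, hence in distribution; as the largest order statistic of a uniform sample, $\max_j U_j$ has the law $B(r,1)$. The same bound gives $0\le Z_n\le r^{1/n}\le r$, so each $Z_n^s$ is uniformly bounded and bounded convergence forces $\E(Z_n^s)\to\E(Z_\infty^s)=\int_0^1 x^s\,rx^{r-1}\,dx=\frac{r}{s+r}$, the common leading term of every expansion.

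Next I would perform the symmetry reduction. Splitting $[0,1]^r$ into the $r$ congruent pieces on which one coordinate dominates, taking $x_1=\max$ and substituting $u_j=x_j/x_1$ ($2\le j\le r$, Jacobian $x_1^{r-1}$) gives
\[
\E(Z_n^s)=\frac{r}{s+r}\,J_{r-1}(n,s),\qquad J_{r-1}(n,s)=\int_{[0,1]^{r-1}}\Big(1+\sum_{j=2}^r u_j^n\Big)^{s/n}du .
\]
For the $n^{-2}$ term I expand $(1+V)^{s/n}=\exp(\tfrac{s}{n}\log(1+V))$ with $V=\sum_{j\ge2}u_j^n$; only $\tfrac{s}{n}\int\log(1+V)\,du$ survives to order $n^{-2}$, and keeping just the diagonal monomials $u_j^{kn}$ in $\log(1+V)=\sum_{k\ge1}\frac{(-1)^{k-1}}{k}V^k$ yields $\int\log(1+V)\,du=\frac{r-1}{n}\sum_{k\ge1}\frac{(-1)^{k-1}}{k^2}+O(n^{-2})=\frac{(r-1)\zt(2)}{2n}+O(n^{-2})$. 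Since $\zt(\bar2)=-\tfrac12\zt(2)$ this gives $\E(Z_n^s)=\frac{r}{s+r}\big(1-\frac{s(r-1)}{n^2}\zt(\bar2)+O(n^{-3})\big)$. For $r=2$ the integral $J_1$ is one-dimensional and the argument is formally identical to Theorem~\ref{main}: expanding the exponential, using $\log^k(1+u^n)=k!\sum_m\frac{u^{mn}}{m!}s(m,k)$, integrating $\int_0^1u^{mn}du=\frac{1}{mn+1}=\sum_{\ell\ge1}\frac{(-1)^{\ell-1}}{(mn)^\ell}$, rewriting $\frac{s(m,k)}{m!}=\frac{(-1)^{m-k}}{m}\zt_{m-1}(\{1\}_{k-1})$ via Lemma~\ref{stirmzv}, and collecting the power $p=k+\ell$; the inner sum over $m$ then telescopes through $\sum_m\frac{(-1)^m}{m^{a}}\zt_{m-1}(\{1\}_{k-1})=\zt(\bar a,\{1\}_{k-1})$ into $\zt(\overline{p+1-k},\{1\}_{k-1})$, producing the stated formula with prefactor $\frac{2}{s+2}$.

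The substantive case is $r=3$, where $V=u_2^n+u_3^n$. The same expansion gives $J_2=\sum_{k\ge0}\frac{s^k}{n^k}\sum_{m\ge k}\frac{s(m,k)}{m!}\int_{[0,1]^2}V^m\,du$, and the binomial theorem with $\int_0^1 u_2^{an}u_3^{bn}\,du=\frac{1}{an+1}\cdot\frac{1}{bn+1}$ ($a+b=m$) splits the sum into three parts: the terms with $b=0$ and with $a=0$ each reproduce the one-variable series, which is the origin of the factor $2$ in the first line of the formula, while the terms with $a,b\ge1$ are the genuinely new coupling. There I expand each factor $\frac{1}{an+1}=\sum_{\ell_1\ge0}\frac{(-1)^{\ell_1}}{(an)^{\ell_1+1}}$ and likewise in $b$, which produces the double series in $\ell_1,\ell_2$ and the overall factor $n^{-(\ell_1+\ell_2+2)}$. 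The hard part will be the resummation of the two coupled chains of summation indices: after re-indexing (the binomial coefficients $\binom{i+\ell_2-1}{\ell_2}$ come from a Vandermonde/negative-binomial collapse of the $\ell$-expansion) the inner sums over the ordered indices must be identified with the truncated star values $\zts_m(\{1\}_{a})$ and their weighted counterparts $\zts_m(\{1\}_a;\{1\}_{a-1},2)$, the weight $2^{n_a}$ recording the two ways the index chains arising from $u_2$ and $u_3$ can be merged into a single weakly decreasing chain. Proving that this stuffle-type convolution collapses exactly into the stated combination $\zts_m(\ldots;\ldots,2)-\zts_m(\ldots)-m^{-(\ldots)}$ is the principal obstacle; once it is in place, reordering the summation to group by the total power $p$ of $1/n$ delivers the displayed $r=3$ formula, and the same (heavier) bookkeeping for general $r$ reproduces the leading expansion found above.
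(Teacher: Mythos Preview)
Your overall architecture---symmetry reduction to $\frac{r}{r-1+s}\int_{[0,1]^{r-1}}(1+\sum u_j^n)^{s/n}d\bo{u}$, then $\exp$--$\log$, the Stirling expansion \eqref{logp}, Lemma~\ref{stirmzv}, and the binomial split of $V^m$ into the two boundary terms (giving the factor $2$) plus the cross terms---matches the paper exactly, and your treatment of the leading term and of $r=2$ is correct (modulo the slip $\tfrac{2}{s+2}$, which should read $\tfrac{2}{1+s}$).

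The genuine gap is in the $r=3$ coupling, and your proposed mechanism for it is off. The binomial coefficients $\binom{i+\ell_2-1}{\ell_2}$ do \emph{not} arise from any collapse of the $\ell_1,\ell_2$ geometric expansions; they arise from the classical Nielsen partial-fraction identity \eqref{partialfraction} applied in the \emph{summation} variable $j$ (with $a=\ell_1+1$, $b=\ell_2+1$), which breaks $\frac{1}{j^{\ell_1+1}(m-j)^{\ell_2+1}}$ into pieces involving only $j$ or only $m-j$. After that step one is left with sums of the shape $\sum_{j=1}^{m-1}\binom{m}{j}j^{-r}$, and the weighted star value does not come from a stuffle merge of two index chains: it comes from the explicit evaluation
\[
\sum_{j=1}^{m}\binom{m}{j}\frac{1}{j^{r}}=\zts_m(\{1\}_r;\{1\}_{r-1},2)-\zts_m(\{1\}_r),
\]
whose base case is the integral $\sum_{j}\binom{m}{j}j^{-1}=\int_1^2\frac{t^m-1}{t-1}\,dt=\sum_{k\le m}\frac{2^k}{k}-H_m$ (this is where the weight $2$ originates), with the general case following by the one-line induction $\sum_{j\le m}\binom{m}{j}j^{-r}=\sum_{k\le m}\frac{1}{k}\sum_{j\le k}\binom{k}{j}j^{-(r-1)}$. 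Subtracting the $j=m$ term then yields the extra $-m^{-r}$ you see in the statement. Once you replace your stuffle heuristic by these two concrete steps---partial fractions in $j$, then the binomial-sum lemma---the derivation goes through as written.
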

\subsection{Exact representations}
First, we decompose the hypercube into $r$ parts according to the maximum of the $x_i$:
\[
[0,1]^r =\bigcup_{i=1}^{r}\{x_i \in[0,1],\quad 0\le x_j\le x_i,\ j\in\{1,\dots,r\}\setminus\{i\}\}.
\]
These parts are not disjoint, but their intersection is of measure zero.
By the symmetry of $I_s$ we get
\[
I_s= r \int_0^{1} \bigg(\int_{[0,x_r]^{r-1}} (x_1^n+ x_2^n+\dots +x_r^n)^{\frac{s}n}d(x_1,\dots,x_{r-1})\bigg)dx_r.
\]
We use the substitution $x_j=x_r u_j$, $d x_j=x_r du_j$ to obtain 
\[
I_s= r \int_{0}^1x_r^{r-1}\bigg(\int_{[0,1]^{r-1}} (x_r^nu_1^n+x_r^nu_2^n+ \dots +x_{r-1}^n u_{r-1}^n+1)^{\frac{s}n}d\bo{u}\bigg)dx_r.
\]
This implies that the integrals can be separated:
\begin{align*}
I_s&= r \int_{0}^1x_r^{r-1+s}d x_r \cdot \int_{[0,1]^{r-1}} (1+u_1^n+ \dots +u_{r-1}^n)^{\frac{s}n}d\bo{u}\\
&= \frac{r}{r-1+s}\cdot \int_{[0,1]^{r-1}} (1+u_1^n+ \dots +u_{r-1}^n)^{\frac{s}n}d\bo{u}.
\end{align*}
\par
In order to derive an asymptotic expansion of the remaining integral we use 
the $\exp-\log$ representation:
\[
(1+u_1^n+ \dots +u_{r-1}^n)^{\frac{s}n}=
\exp\big(\frac{s}n\ln(1+u_1^n+ \dots +u_{r-1}^n)\big)
=1+\sum_{k=1}^{\infty}\frac{s^k}{n^k k!} \ln^k(1+u_1^n+ \dots +u_{r-1}^n).
\]
Using Eq. (\ref{logp}), this implies
\[
I_s=\frac{r}{r-1+s}\cdot \bigg(1+\sum_{k=1}^{\infty}\int_{[0,1]^{r-1}}\sum_{m=1}^{\infty}\frac{s^k s(m,k)}{n^k m!} (u_1^n+ \dots +u_{r-1}^n)^m d\bo{u} \bigg),
\]
where (as above) $s(m,k)$ denotes the signed Stirling numbers of the 
first kind.
Then using Lemma \ref{stirmzv}, we have
\[
I_s=\frac{r}{r-1+s}\cdot \bigg(1+\sum_{k=1}^{\infty}\frac{s^k(-1)^k}{n^k}\sum_{m=1}^{\infty}\frac{(-1)^{m}\zt_{m-1}(\{1\}_{k-1})}{m}\int_{[0,1]^{r-1}} (u_1^n+ \dots +u_{r-1}^n)^m d\bo{u} \bigg).
\]
In order to evaluate the remaining integral we substitute $u_j=e^{-t_j}$ and obtain
\[
\int_{[0,1]^{r-1}} (u_1^n+ \dots +u_{r-1}^n)^m d\bo{u}
= \int_{[0,\infty)^{r-1}}e^{-t_1-\dots-t_{r-1}} (e^{-t_1 n}+ \dots +e^{-t_{r-1}n})^m d\bo{t}.
\]
We expand the exponentials and use the multinomial theorem. 
By the symmetry of the integrand and the fact
\[
 \int_0^{\infty}u^p e^{-ku}du=\frac{p!}{k^{p+1}}
\]
we obtain
\begin{align*}
&\int_{[0,\infty)^{r-1}}e^{-t_1-\dots-t_{r-1}} (e^{-t_1 n}+ \dots +e^{-t_{r-1}n})^m d\bo{t}\\
&=\sum_{a=1}^{r-1}\binom{r-1}{a}\sum_{\substack{j_1+\dots+j_a=m\\j_i\ge 1}}\binom{m}{j_1,\dots,j_a}
\sum_{\ell_1,\dots,\ell_a\ge 0}\frac{(-1)^{\ell_1+\dots+\ell_a}}{n^{\ell_1+\dots+\ell_a+a}j_1^{\ell_1+1}\dots j_a^{\ell_a+1}}.
\end{align*}
For $r=2$ there is only a single summand and we get
\[
\int_{[0,\infty)}e^{-t} e^{-t n m} dt 
=\sum_{\ell=0}^{\infty}\frac{(-1)^{\ell}}{(nm)^{\ell+1}}.
\]
Changing summation gives the desired result.
For $r=3$ we get
\begin{multline*}
\int_{[0,\infty)^{2}}e^{-t_1-t_2} (e^{-t_1 n}+ e^{-t_{2}n})^m d(t_1,t_2)=\\
2\sum_{\ell=0}^{\infty}\frac{(-1)^{\ell}}{(nm)^{\ell+1}}+
\sum_{\ell_1,\ell_2\ge 0}\frac{(-1)^{\ell_1+\ell_2}}{n^{\ell_1+\ell_2+2}}
\sum_{j=1}^{m-1}\binom{m}{j}\frac1{j^{\ell_1+1}(m-j)^{\ell_2+1}}.
\end{multline*}
In order to simplify the arising sums we use a classical partial fraction decomposition, which appears already in~\cite{N},
\begin{align}
\label{partialfraction}
\frac1{j^a(m-j)^b}=\sum_{i=1}^a\frac{\binom{i+b-2}{b-1}}{m^{i+b-1}j^{a+1-i}}+
\sum_{i=1}^b\frac{\binom{i+a-2}{a-1}}{m^{i+a-1}(m-j)^{b+1-i}},
\end{align}
Thus, 
\begin{align*}
&\sum_{\ell_1,\ell_2\ge 0}\frac{(-1)^{\ell_1+\ell_2}}{n^{\ell_1+\ell_2+2}}\sum_{j=1}^{m-1}\binom{m}{j}\frac1{j^{\ell_1+1}(m-j)^{\ell_2+1}}\\
&=\sum_{\ell_1,\ell_2\ge 0}\frac{(-1)^{\ell_1+\ell_2}}{n^{\ell_1+\ell_2+2}}
\bigg(\sum_{i=1}^{\ell_1+1}\frac{\binom{i+\ell_2-1}{\ell_2}}{m^{i+\ell_2}} \sum_{j=1}^{m-1}\binom{m}{j}\frac1{j^{\ell_1+2-i}}+
\sum_{i=1}^{\ell_2+1}\frac{\binom{i+\ell_1-1}{\ell_1}}{m^{i+\ell_1}}  \sum_{j=1}^{m-1}\binom{m}{j}\frac1{j^{\ell_2+2-i}}\bigg).
\end{align*}

\begin{lem}
For positive integers $r,m$ we have
\[
\sum_{j=1}^{m}\binom{m}j\frac1{j^r} =
\zts_m(\{1\}_r;\{1\}_{r-1},2) - \zts_m(\{1\}_r).
\]
\end{lem}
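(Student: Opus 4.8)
The plan is to express both sides as functions of the pair $(r,m)$, show that each obeys the \emph{same} first-order recursion in $r$ with the same initial value, and then induct on $r$. Unfolding the right-hand side via the definitions (\ref{DefStarZeta}) and (\ref{DefStarZetaWeight}) gives
\[
\zts_m(\{1\}_r;\{1\}_{r-1},2)-\zts_m(\{1\}_r)=\sum_{m\ge n_1\ge n_2\ge\cdots\ge n_r\ge 1}\frac{2^{n_r}-1}{n_1 n_2\cdots n_r}=:S_r(m),
\]
while the left-hand side is $f_r(m):=\sum_{j=1}^m\binom mj\frac1{j^r}$. I set $S_0(m):=2^m-1=:f_0(m)$, so that the two sides agree trivially at $r=0$, which will serve as the base case.

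The recursion for $S_r$ is immediate: fixing the outermost index $n_1=p$ and summing the remaining $r-1$ indices under the constraint $p\ge n_2\ge\cdots\ge n_r\ge 1$ recovers $S_{r-1}(p)$, whence
\[
S_r(m)=\sum_{p=1}^m\frac{1}{p}\,S_{r-1}(p),\qquad r\ge 1 .
\]
(For $r=1$ the inner nested sum is empty and the numerator $2^{n_r}-1=2^p-1$ is just $S_0(p)$, so the formula is uniform.)

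The heart of the argument is to show that $f_r$ satisfies the \emph{identical} recursion. Swapping the order of summation in $\sum_{p=1}^m f_{r-1}(p)/p$ turns it into $\sum_{j=1}^m j^{-(r-1)}\sum_{p=j}^m\binom pj/p$, so the claim $f_r(m)=\sum_{p=1}^m f_{r-1}(p)/p$ reduces to the single combinatorial identity
\[
\sum_{p=j}^m\frac{1}{p}\binom pj=\frac{1}{j}\binom mj .
\]
This is the step I expect to be the crux; it follows from the rewriting $\frac1p\binom pj=\frac1j\binom{p-1}{j-1}$ together with the hockey-stick identity $\sum_{q=j-1}^{m-1}\binom{q}{j-1}=\binom mj$. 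Once both $f_r$ and $S_r$ are known to satisfy $g_r(m)=\sum_{p=1}^m g_{r-1}(p)/p$ for $r\ge 1$ and to coincide at $r=0$, a one-line induction on $r$ yields $f_r(m)=S_r(m)$ for all $r\ge 1$, which is exactly the assertion. The only care needed is bookkeeping with the summation ranges and confirming that the common recursion genuinely bottoms out at the shared value $2^m-1$; the telescoping identity above does all the real work.
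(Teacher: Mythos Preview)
Your proof is correct and follows the same induction on $r$ as the paper's, with the identical recursion $g_r(m)=\sum_{p\le m} g_{r-1}(p)/p$ obtained via $\tfrac1p\binom pj=\tfrac1j\binom{p-1}{j-1}$ and the hockey-stick identity. The only difference is cosmetic: you anchor at $r=0$ (where both sides equal $2^m-1$), while the paper starts at $r=1$ and computes $\sum_{j}\binom mj/j$ through the integral $\int_0^1\frac{(1+t)^m-1}{t}\,dt$; your choice of base case is a mild streamlining.
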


\begin{proof}
We use induction with respect to $r$.  For $r=1$ we have
\begin{multline*}
\sum_{j=1}^{m}\binom{m}{j}\frac{1}{j}=\int_0^1\frac{(1+t)^m-1}{t}dt
=\int_1^2\frac{t^m-1}{t-1}dt=\\
\int_1^2(t^{m-1}+t^{m-2}+\dots+t+1)dt=
\sum_{k=1}^m\frac{2^k}k-H_m=\zts_m(1;2)-\zts_m(1).
\end{multline*}
Assuming the result for $r-1$, 
\begin{multline*}
\sum_{j=1}^{m}\binom{m}j\frac1{j^r}
=\sum_{j=1}^m\sum_{k=1}^m\binom{k-1}{j-1}\frac1{j^r}
=\sum_{k=1}^m\sum_{j=1}^k\binom{k-1}{j-1}\frac1{j^r}
=\sum_{k=1}^{m}\frac1{k}\sum_{j=1}^{k}\binom{k}{j}\frac{1}{j^{r-1}}\\
=\sum_{k=1}^{m}\frac1{k}\Big(\zts_{k}(\{1\}_{r-1};\{1\}_{r-2},2) - 
\zts_k(\{1\}_{r-1})\Big)
=\zts_{m}(\{1\}_{r};\{1\}_{r-1},2)-\zts_m(\{1\}_{r}).
\end{multline*}
\end{proof}
This gives

\begin{align*}
&\sum_{\ell_1,\ell_2\ge 0}\frac{(-1)^{\ell_1+\ell_2}}{n^{\ell_1+\ell_2+2}}\sum_{j=1}^{m-1}\binom{m}{j}\frac1{j^{\ell_1+1}(m-j)^{\ell_2+1}}\\
&=\sum_{\ell_1,\ell_2\ge 0}\frac{(-1)^{\ell_1+\ell_2}}{n^{\ell_1+\ell_2+2}}
\bigg[\sum_{i=1}^{\ell_1+1}\frac{\binom{i+\ell_2-1}{\ell_2}}{m^{i+\ell_2}}\Big(\zts_{m}(\{1\}_{\ell_1+2-i};\{1\}_{\ell_1+1-i},2)-\zts_{m}(\{1\}_{\ell_1+2-i})-\frac{1}{m^{\ell_1+2-i}}\Big) \\
&\quad +\sum_{i=1}^{\ell_2+1}\frac{\binom{i+\ell_1-1}{\ell_1}}{m^{i+\ell_1}}
\Big(\zts_{m}(\{1\}_{\ell_2+2-i};\{1\}_{\ell_2+1-i},2)-\zts_{m}(\{1\}_{\ell_2+2-i})-\frac{1}{m^{\ell_2+2-i}}\Big)\bigg].
\end{align*}

\section{Outlook and Acknowledgments}
It seems that similar phenomena appear when discussing random variables $Z_n=\Vert(X_1,\dots,X_n)\Vert_n$, where the $X_i$ are i.i.d. random variables. 

\medskip

The authors thank C. Vignat for his insightful remark, pointing out the closed form expression for the $\gamma_{s+1,j}$.

\medskip

The fourth author would like to thank H. Prodinger for helping in computing an Euler sum and W.Wang for providing
a useful reference.


\begin{thebibliography}{9}
\bibitem{A}
V. Adamchik, On Stirling number and Euler sums, \emph{J. Comput. Appl.
Math.} {\bf 79} (1997), 119-130.
\bibitem{BBB}
J. M. Borwein, D. M. Bradley, and D. J. Broadhurst, Evaluation of
$k$-fold Euler/Zagier sums:  a compendium of results for arbitrary $k$,
\emph{Electron. J. Combin.} {\bf 4(2)} (1997), res. art. 5.
\bibitem{BBV}
J. Bl\"umlein, D. J. Broadhurst, and J. A. M. Vermaseren, The multiple
zeta value data mine, \emph{Comput. Phys. Commun.} {\bf 181} (2010),
583-625.
\bibitem{FS2009}
P. Flajolet and R. Sedgewick, \emph{Analytic Combinatorics}.
Cambridge Univ. Press, Cambridge, UK, 2009.
\bibitem{GKP}
R. L. Graham, D. E. Knuth, and O. Patashnik, \emph{Concrete Mathematics},
2nd ed., Addison-Wesley, New York, 1994.
\bibitem{H92}
M. E. Hoffman, Multiple harmonic series, \emph{Pacific J. Math.} {\bf 152}
(1992), 275-290.
\bibitem{H95}
M. E. Hoffman, Derivative polynomials for tangent and secant,
\emph{Amer. Math. Monthly} {\bf 102} (1995), 23-30.
\bibitem{K}
K. S. K\"olbig, Nielsen's generalized polylogarithms, \emph{SIAM J. Math.
Anal.} {\bf 17} (1986), 1232-1258.
\bibitem{L}
G. Louchard, Two applications of polylog functions and Euler sums, manuscript, 2017. ArXiv:1709.08686.
\bibitem{MV}
V. H. Moll and C. Vignat, Generalized Bernoulli numbers and a formula of
Lucas, \emph{Fibonacci Quart.} {\bf 53} (2015), 349-359.
\bibitem{N}
N.~Nielsen, 
\emph{Handbuch der Theorie der Gammafunktion,} Chelsea Publishing Company, 
New York, 1965.
\bibitem{OLBC}
F. W. J. Olver, D. W. Lozier, R. F. Boisvert, and C.W. Clark, editors. 
\emph{NIST Handbook of Mathematical Functions.} Cambridge University Press, 
2010.
\bibitem{Z}
J. Zhao, On a conjecture of Borwein, Bradley and Broadhurst, 
\emph{J. reine angew. Math.} {\bf 639} (2010), 223-233.
\end{thebibliography}
\end{document}